\definecolor{webred}{rgb}{0.75,0,0}
\definecolor{webgreen}{rgb}{0,0.75,0}
\definecolor{refkey}{gray}{0.75}
\numberwithin{equation}{section}
\newtheorem{theo}{Theorem}[section]
\newtheorem{lem}{Lemma}[section]
\newtheorem{Def}[theo]{Definition}
\theoremstyle{remark}
\newtheorem{rem}{Remark}[section]
\newcommand{\de}{\delta}
\newcommand{\ep}{\varepsilon}
\def\R{{\mathbb{R}}}
\def\d{\displaystyle}
\def\e{{\varepsilon}}
\def\p{\partial}
\def\G{G_1}
\def\GG{\tilde{G}_1}
\def\H{G_2}
\def\HH{\tilde{G}_2}
\date{}
\subjclass[2010]{35L71,  35B44}
\keywords{blow-up, critical curve, lifespan, nonlinear wave equations, semilinear weakly coupled system, scale-invariant damping, time-derivative nonlinearity.}
\begin{document}

\title[Improvement on the blow-up for  a weakly coupled wave equations  - Invariant case]{Improvement on the blow-up for  a weakly coupled wave equations  with  scale-invariant damping and mass and time derivative nonlinearity}
\author[M. Hamouda  and M. A. Hamza]{Makram Hamouda$^{1}$ and Mohamed Ali Hamza$^{1}$}
\address{$^{1}$ Department of Basic Sciences, Deanship of Preparatory Year and Supporting Studies, Imam Abdulrahman Bin Faisal University, P. O. Box 1982, Dammam, Saudi Arabia.}

\medskip

\email{mmhamouda@iau.edu.sa (M. Hamouda)} 
\email{mahamza@iau.edu.sa (M.A. Hamza)}

\pagestyle{plain}


\maketitle
\begin{abstract}

An improvement of \cite{Palmieri} on the blow-up region and the lifespan estimate of a weakly coupled system of wave equations with damping and mass in the \textit{scale-invariant case} and with time-derivative nonlinearity is obtained in this article. Indeed, thanks to a better understanding of the dynamics of the solutions, we give here a better characterization of the blow-up region. Furthermore, the techniques used in this article may be extended to other systems and interestingly they simplify the proof of the blow-up result in \cite{Our3} which is concerned with the single wave equation in the same context as in the present work.

\end{abstract}


\section{Introduction}
\par\quad

The weakly coupled system of semilinear wave equations in the presence of  damping and mass terms in scale-invariant case with time derivative nonlinearity reads as follows:
\begin{align}\label{G-sys}
\begin{cases}\d u_{tt}-\Delta u +\frac{\mu_1}{1+t}u_t+\frac{\nu_1^2}{(1+t)^2}u=|\partial_t v|^p, &   
\quad (x,t) \in \R^N\times[0,\infty), \vspace{.1cm}\\
\d v_{tt}-\Delta v +\frac{\mu_2}{1+t}v_t+\frac{\nu_2^2}{(1+t)^2}v=|\partial_t u|^q, &  \quad (x,t) \in \R^N\times[0,\infty),\\
u(x,0)=\e f_1(x), \ \ v(x,0)=\e f_2(x), & \quad x\in \mathbb{R}^N, \\ u_t(x,0)=\e g_1(x), \ \  v_t(x,0)=\varepsilon g_2(x), & \quad x\in \mathbb{R}^N,
\end{cases}
\end{align} 
where $\mu_1,\mu_2, \nu^2_1$ and $\nu^2_2$ are  nonnegative constants.  The positive parameter $\e$ characterizes the size of the initial
data,  and $ f_1, f_2,g_1$ and $g_2$ are positive functions assumed to be compactly supported on  $B_{\R^N}(0,R), R>0$.\\
Naturally, along this article , we suppose that $p, q>1$.

First, we recall  the Glassey exponent $p_G$ which is given by
\begin{equation}\label{Glassey}
p_G=p_G(N):=1+\frac{2}{N-1}.
\end{equation}
It is well-known  that the aforementioned  critical value, $p_G$, characterizes the threshold between the global existence ($p>p_G$) and the nonexistence ($p \le p_G$) regions; see e.g. \cite{Hidano1,Hidano2,John1,Sideris,Tzvetkov,Zhou1}.\\

In this paragraph, we  recall some results related the massless case for a single equation. We start by mentioning  the blow-up result for the solution of a single equation inherited from \eqref{G-sys} without mass term. Indeed,   Lai and Takamura  showed in \cite{LT2} an upper bound estimate of the lifespan. Later, in \cite{Palmieri}, Palmieri and Tu  enhanced this result  by extending the blow-up region for $p$ in the case of a single equation (with mass term). More precisely, they obtain a blow-up result for $p \in (1, p_G(N+\sigma(\mu,0))]$ where 
\begin{equation}\label{sigma}
\sigma=\sigma(\mu,\nu):=\left\{
\begin{array}{lll}
\mu+1 -\sqrt{\de}& \textnormal{if} &\de \in [0,1),\\
\mu & \text{if} &\de \ge 1,
\end{array}
\right.
\end{equation}
and $\de$ is given by \eqref{delta} below.  Note that the result in \cite{Palmieri} was recently improved in \cite{Our2} by extending the upper bound for $p$ from $p_G(N+\sigma(\mu,0))$ to $p_G(N+\mu)$.  \\

Recently, in  \cite{Our3}, the  case of the scale-invariant  damped equation with mass and  time-derivative nonlinearity was studied; this reads  
\begin{equation}
\label{T-sys-bis-old}
\left\{
\begin{array}{l}
\d u_{tt}-\Delta u+\frac{\mu}{1+t}u_t+\frac{\nu^2}{(1+t)^2}u=|u_t|^p, 
\quad \mbox{in}\ \R^N\times[0,\infty),\\
u(x,0)=\e f(x),\ u_t(x,0)=\e g(x), \quad  x\in\R^N.
\end{array}
\right.
\end{equation}
Let us introduce the following  quantity, that we assume to be  positive,
\begin{equation}\label{delta}
\de=\de(\mu,\nu):=(\mu-1)^2-4\nu^2.
\end{equation}
Furthermore, we define
\begin{equation}\label{sigma}
\sigma=\sigma(\mu,\nu):=\left\{
\begin{array}{lll}
\mu +1-\sqrt{\delta}& \textnormal{if} &\delta \in [0,1),\\
\mu & \text{if} &\delta \ge 1.
\end{array}
\right.
\end{equation}
Using a functional approach, a blow-up result is proven in \cite{Our3} for  \eqref{T-sys-bis-old}, improving thus the one obtained in \cite{Palmieri}. In fact,  the blow-up interval, $p \in (1, p_G(N+\sigma)]$ ($\sigma$ is given by \eqref{sigma}),  is ameliorated in comparison with \cite{Palmieri},  to  show the blow-up inside the region  $p \in (1, p_G(N+\mu)]$, with $\de \in (0,1)$. However, for $\de \ge 1$, the two results, in  \cite{Our3} and \cite{Palmieri}, are the same.  In relationship with these works, we also mention the articles \cite{Our2,LT2} where the massless case is investigated.\\

Now, letting $\mu_1 = \mu_2 = \nu_1=\nu_2= 0$ in \eqref{G-sys} we find the following coupled system:
\begin{align}\label{G-sys-0}
\begin{cases} u_{tt}-\Delta u =|\partial_t v|^p, &   
 (x,t) \in \R^N\times[0,\infty), \vspace{.1cm}\\
 v_{tt}-\Delta v =|\partial_t u|^q, &   
 (x,t) \in \R^N\times[0,\infty), \vspace{.1cm}\\
u(x,0)=\e f_1(x), \ \ v(x,0)=\e  f_2(x), & x\in \mathbb{R}^N, \\ u_t(x,0)=\e g_1(x), \ \  v_t(x,0)=\varepsilon g_2(x), & x\in \mathbb{R}^N.
\end{cases}
\end{align}
For the global existence of solutions to  \eqref{G-sys-0}, we refer the reader to \cite{Kubo}. However, the blow-up of \eqref{G-sys-0} has been the subject of several works; see e.g. \cite{Dao-Reissig,Ikeda-sys,Palmieri1,Palmieri-Takamura-arx}.  More precisely, the critical (in the sense of interface between blow-up and global existence) curve for $p,q$ is given by
\begin{equation}\label{crit-curve-0}
\Upsilon(N,p,q):=\max (\Lambda(N,p,q), \Lambda(N,q,p))=0,
\end{equation}
where 
\begin{equation}\label{Lambda}
\d \Lambda(N,p,q):= \frac{p+1}{pq-1}-\frac{N-1}{2}.
\end{equation}
Under some assumptions, the solution $(u,v)$ of \eqref{G-sys-0} blows up in finite time $T(\e)$ for small initial data (of size $\e$),  namely 
\begin{equation}\label{Teps}
T(\e) \le \left\{
\begin{array}{lll}
C \e^{-\Upsilon(N,p,q)}& \text{if}&\Upsilon(N,p,q)>0,\\
\exp(C \e^{-(pq-1)})& \text{if}&\Upsilon(N,p,q)=0, \ p \neq q,\\
\exp(C \e^{-(p-1)})& \text{if}&\Upsilon(N,p,q)=0, \ p = q.
\end{array}
\right.
\end{equation}

In the context of the present work,  Palmieri and Tu  \cite{Palmieri} proved   a blow-up result for the system  \eqref{G-sys}. More precisely, the authors in \cite{Palmieri} proved that there is blow-up for the system  \eqref{G-sys} for $p,q$  satisfying
\begin{equation}\label{blow-up-reg}
\Omega(N,\sigma_1,\sigma_2,p,q):=\max (\Lambda(N+\sigma_1,p,q), \Lambda(N+\sigma_2,q,p)) \ge 0,
\end{equation}
where $\Lambda$ is given by \eqref{Lambda} and $\sigma_i=\sigma(\mu_i,\nu_i), i=1,2$ is given by  \eqref{sigma}.\\
Indeed, for small initial data (of size $\e$), the solution $(u,v)$ of \eqref{G-sys} blows up in finite time $T(\e)$ that is bounded as 
\begin{equation}\label{Teps}
T(\e) \le \left\{
\begin{array}{lll}
C \e^{-\Omega(N,\sigma_1,\sigma_2,p,q)}& \text{if}&\Omega(N,\sigma_1,\sigma_2,p,q)>0,\\
\exp(C \e^{-(pq-1)})& \text{if}&\Omega(N,\sigma_1,\sigma_2,p,q)=0, \\
\exp(C \e^{-\min \left(\frac{pq-1}{p+1},\frac{pq-1}{q+1}\right)})& \text{if}&\Lambda(N+\sigma_1,p,q)=\Lambda(N+\sigma_2,p,q)=0.
\end{array}
\right.
\end{equation}

Concerning the case of coupled equations, namely \eqref{G-sys}  without mass terms ($ \nu_1=\nu_2= 0$), an improvement was obtained in \cite{Our4} for the blow-up results and the lifespan. The results in \cite{Our4} are ameliorating the ones in \cite{Palmieri}.\\

In this article, we refine the results in \cite{Palmieri} when at least one of the coefficients $\delta_i, i=1,2$, is in $[0,1)$. In fact, for positive values of $\delta_i$  ($i=1,2$), we extend the  results obtained in  \cite{Palmieri} to show that the new blow-up region  does not depend on the mass parameters $\nu_i, i=1,2$ (and hence nor on $\sigma_i, i=1,2$). A similar observation was concluded for the case of a one equation that we studied in \cite{Our3}, and the aim here is to extend this examination to the case of the coupled system  \eqref{G-sys}. But, although the obtaining of similar results for damped coupled systems with mass terms is predictable, the situation is somehow more delicate. Indeed, the techniques used in \cite{Our3} are now longer effective for the system \eqref{G-sys}. To overcome this difficulty, we first write the linear problem associated with \eqref{G-sys} which reduces in this case to the following single equation: 
\begin{align}\label{G-sys-lin}
\d w^L_{tt}-\Delta w^L +\frac{\mu_i}{1+t}w^L_t+\frac{\nu_i^2}{(1+t)^2}w^L=0, 
\end{align} 
where $w$ stands for $u^L$ or $v^L$, the solutions of the linear problem associated with \eqref{G-sys}.\\
Clearly, the equation  \eqref{G-sys-lin} is invariant under the following transformation:
\begin{align}\label{invar}
\tilde{w}^L(x,t)=w^L(\alpha x, \alpha(1+t)-1), \ \alpha>0.
\end{align} 
Taking advantage of the aforementioned invariance properties, we refine our choice for a functional family that is indexed by a positive parameter $\eta$.  This judicious choice implies a better description of  the dynamics of the solution of  \eqref{G-sys}. More precisely, we obtain, for $\eta$ large enough, the coercivity of the functional that will be introduced later on to show the blow-up results.

Thanks to the above observations, we finally enhance the result on the blow-up region, defined by \eqref{blow-up-reg} and obtained in \cite{Palmieri},   by showing that the critical curve is characterized by
\begin{equation}\label{blow-up-reg-imp}
\Omega(N,\mu_1,\mu_2,p,q)=\max (\Lambda(N+\mu_1,p,q), \Lambda(N+\mu_2,q,p)) \ge 0,
\end{equation}
where $\Lambda$ is given by \eqref{Lambda}.\\


The  outline of this article is presented as follows. First, we introduce in Section \ref{sec-main}   the weak formulation of (\ref{G-sys}) in the energy space. Then, in the same section, we state our main result.  Some technical lemmas are proven in Section \ref{aux}. Finally, we show  the proof of the main result in Section \ref{sec-ut}.

\section{Main Result}\label{sec-main}
\par

The aim of this  section is  to state our main result for which  we will write  the equivalent of the system (\ref{G-sys})   in the corresponding energy space. More precisely, the weak formulation associated with  (\ref{G-sys}) reads  as follows:
\begin{Def}\label{def1}
 We say that $(u,v)$ is an energy  solution of
 (\ref{G-sys}) on $[0,T)$
if
\begin{displaymath}
\left\{
\begin{array}{l}
u,v\in \mathcal{C}([0,T),H^1(\R^N))\cap \mathcal{C}^1([0,T),L^2(\R^N)), \vspace{.1cm}\\
  \ u_t \in L^q_{loc}((0,T)\times \R^N), \ v_t \in L^p_{loc}((0,T)\times \R^N)
 \end{array}
  \right.
\end{displaymath}
satisfies, for all $\Phi, \tilde{\Phi} \in \mathcal{C}_0^{\infty}(\R^N\times[0,T))$ and all $t\in[0,T)$, the following equations:
\begin{equation}
\label{energysol2}
\begin{array}{l}
\d\int_{\R^N}u_t(x,t)\Phi(x,t)dx-\int_{\R^N}u_t(x,0)\Phi(x,0)dx -\int_0^t  \int_{\R^N}u_t(x,s)\Phi_t(x,s)dx \,ds \vspace{.2cm}\\
\d+\int_0^t  \int_{\R^N}\nabla u(x,s)\cdot\nabla\Phi(x,s) dx \,ds +\int_0^t  \int_{\R^N}\frac{\mu_1}{1+s}u_t(x,s) \Phi(x,s)dx \,ds\vspace{.2cm}\\
\d +\int_0^t\int_{\R^N}\frac{\nu_1^2}{(1+s)^2}u(x,s)\Phi(x,s)dx\,ds=\int_0^t \int_{\R^N}|v_t(x,s)|^p\Phi(x,s)dx \,ds,
\end{array}
\end{equation}
and
\begin{equation}
\label{energysol3}
\begin{array}{l}
\d\int_{\R^N}v_t(x,t)\tilde{\Phi}(x,t)dx-\int_{\R^N}v_t(x,0)\tilde{\Phi}(x,0)dx -\int_0^t  \int_{\R^N}v_t(x,s)\tilde{\Phi}_t(x,s)dx \,ds\vspace{.2cm}\\
\d +\int_0^t  \int_{\R^N}\nabla v(x,s)\cdot\nabla\tilde{\Phi}(x,s) dx \,ds +\int_0^t  \int_{\R^N}\frac{\mu_2}{1+s}v_t(x,s) \tilde{\Phi}(x,s)dx \,ds\vspace{.2cm}\\
\d +\int_0^t\int_{\R^N}\frac{\nu_2^2}{(1+s)^2}v(x,s)\tilde{\Phi}(x,s)dx\,ds=\int_0^t \int_{\R^N}|u_t(x,s)|^q\tilde{\Phi}(x,s)dx \,ds,
\end{array}
\end{equation}
together with the conditions $u(x,0)=\varepsilon  f_1(x)$ and $v(x,0)=\varepsilon  f_2(x)$ being satisfied in $H^1(\mathbb{R}^N)$.\\
After some elementary computations, \eqref{energysol2} and \eqref{energysol3} can be written, respectively, in the following way
\begin{equation}
\begin{array}{l}\label{energysol2-bis}
\d \int_{\R^N}\big[u_t(x,t)\Phi(x,t)- u(x,t)\Phi_t(x,t)+\frac{\mu_1}{1+t}u(x,t) \Phi(x,t)\big] dx \vspace{.2cm}\\
\d \int_0^t  \int_{\R^N}u(x,s)\left[\Phi_{tt}(x,s)-\Delta \Phi(x,s) -\frac{\partial}{\partial s}\left(\frac{\mu}{1+s}\Phi(x,s)\right)+\frac{\nu_1^2}{(1+s)^2}\Phi(x,s)\right]dx \,ds\vspace{.2cm}\\
\d =\int_{0}^{t}\int_{\R^N}|u_t(x,s)|^p\Phi(x,s)dx \, ds + \e \int_{\R^N}\big[-f_1(x)\Phi_t(x,0)+\left(\mu_1 f_1(x)+g_1(x)\right)\Phi(x,0)\big]dx,
\end{array}
\end{equation}
and
\begin{equation}
\begin{array}{l}\label{energysol3-bis}
\d \int_{\R^N}\big[v_t(x,t)\tilde{\Phi}(x,t)- v(x,t)\tilde{\Phi}_t(x,t)+\frac{\mu_2}{1+t}u(x,t) \tilde{\Phi}(x,t)\big] dx \vspace{.2cm}\\
\d \int_0^t  \int_{\R^N}v(x,s)\left[\tilde{\Phi}_{tt}(x,s)-\Delta \tilde{\Phi}(x,s) -\frac{\partial}{\partial s}\left(\frac{\mu_2}{1+s}\tilde{\Phi}(x,s)\right)+\frac{\nu_2^2}{(1+s)^2}\tilde{\Phi}(x,s)\right]dx \,ds\vspace{.2cm}\\
\d =\int_{0}^{t}\int_{\R^N}|v_t(x,s)|^p\tilde{\Phi}(x,s)dx \, ds + \e \int_{\R^N}\big[-f_2(x)\Phi_t(x,0)+\left(\mu_2 f_2(x)+g_2(x)\right)\tilde{\Phi}(x,0)\big]dx. 
\end{array}
\end{equation}
\end{Def}

\begin{rem}\label{rem-supp}
Since $f$ and $g$ are supported on $B_{\R^N}(0,R)$, one can see that $\mbox{\rm supp}(u), \mbox{\rm supp}(v) \ \subset\{(x,t)\in\R^N\times[0,\infty): |x|\le t+R\}$. Consequently, one  can choose any test function $\Phi$  which is not necessarily compactly supported.
\end{rem}

In the following, we state the  main result of this article.
\begin{theo}
\label{blowup}
Let $p, q >1$. For  $i=1,2$, let $\mu_i, \nu_i^2 >0$,    with $\delta_i:=\de(\mu_i,\nu_i) >0$ (see \eqref{delta}), 
such that 
\begin{equation}\label{assump}
\Omega(N,\mu_1,\mu_2,p,q) \ge 0,
\end{equation}
where  $\Omega$ is defined by \eqref{blow-up-reg}.\\
Assume that  $f_1, f_2\in H^1(\R^N)$ and $g_1, g_2 \in L^2(\R^N)$ are non-negative functions which are compactly supported on  $B_{\R^N}(0,R)$,
and  do not vanish everywhere. Furthermore, we suppose that
\begin{equation}\label{hypfg}
  \frac{\mu_i-1-\sqrt{\de_i}}{2}f_i(x)+g_i(x) \ge 0, \quad i=1,2.
  \end{equation}
Let $(u,v)$ be an energy solution of \eqref{energysol2}-\eqref{energysol3} on $[0,T_\e)$ such that $\mbox{\rm supp}(u), {\rm supp}(v)\ \subset\{(x,t)\in\R^N\times[1,\infty): |x|\le t+R\}$. 
Then, there exists a constant $\e_0=\e_0( f_1,  f_2,g_1, g_2,N,R,p,q,\mu_1,\mu_2)>0$
such that $T_\e$ verifies
\begin{equation}\label{Teps1}
T(\e) \le \left\{
\begin{array}{lll}
C \e^{-\Omega(N,\mu_1,\mu_2,p,q)}& \text{if}&\Omega(N,\mu_1,\mu_2,p,q)>0,\\
\exp(C \e^{-(pq-1)})& \text{if}&\Omega(N,\mu_1,\mu_2,p,q)=0, \\
\exp(C \e^{-\min \left(\frac{pq-1}{p+1},\frac{pq-1}{q+1}\right)})& \text{if}&\Lambda(N+\mu_1,p,q)=\Lambda(N+\mu_2,q,p)=0.
\end{array}
\right.
\end{equation}
 where $C$ is a positive constant independent of $\e$ and $0<\e\le\e_0$.
\end{theo}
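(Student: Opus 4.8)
The plan is to run a test-function argument tailored to the Glassey-type (time-derivative) nonlinearity, in which the scale invariance \eqref{invar} is used to manufacture the right family of multipliers. First I would look for separated test functions $\Phi(x,t)=b_1(t)\phi(x)$ and $\tilde\Phi(x,t)=b_2(t)\phi(x)$, where $\phi$ is the classical positive eigenfunction of the Laplacian, $\phi(x)=\int_{S^{N-1}}e^{x\cdot\omega}\,d\omega$ (so that $\Delta\phi=\phi$ and $\phi(x)\sim c_N|x|^{-(N-1)/2}e^{|x|}$), and each $b_i$ is a positive solution of the second-order ODE obtained by forcing the bracketed linear operator in \eqref{energysol2-bis}--\eqref{energysol3-bis} to annihilate $b_i(t)\phi(x)$. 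That ODE is of Euler--Bessel type, with decaying branch $b_i(t)\sim(1+t)^{\mu_i/2}e^{-t}$; its indicial exponents $\tfrac{1-\mu_i\pm\sqrt{\delta_i}}{2}$ are exactly what make the rate $\mu_i$ --- rather than $\sigma_i$ --- the natural one, and the sign hypothesis \eqref{hypfg}, keyed to these exponents, is precisely what renders the contribution of the initial data to the functionals non-negative. The freedom provided by the $\eta$-indexed family is what lets me reconcile this sign requirement with the coercivity demanded below.

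Then I would insert $\Phi,\tilde\Phi$ into the weak identities \eqref{energysol2-bis}--\eqref{energysol3-bis}. Since $b_i\phi$ solves the adjoint linear equation, the double space--time integral drops out, leaving
\begin{equation*}
\mathcal U(t)=\int_0^t\!\!\int_{\R^N}|v_t|^p\,\Phi\,dx\,ds+\e\,C_1,\qquad \mathcal V(t)=\int_0^t\!\!\int_{\R^N}|u_t|^q\,\tilde\Phi\,dx\,ds+\e\,C_2,
\end{equation*}
where $\mathcal U(t)=\int_{\R^N}[u_t\Phi-u\Phi_t+\tfrac{\mu_1}{1+t}u\Phi]\,dx$, $\mathcal V(t)$ is the analogue for $v$, and $C_1,C_2\ge0$ by \eqref{hypfg}. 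Differentiating yields the clean identities $\mathcal U'(t)=\int_{\R^N}|v_t|^p\Phi\,dx\ge0$ and $\mathcal V'(t)=\int_{\R^N}|u_t|^q\tilde\Phi\,dx\ge0$, so that positivity and compact support of the data, together with Remark \ref{rem-supp}, propagate $\mathcal U,\mathcal V>0$. This is where the $\eta$-refinement enters decisively: choosing $\eta$ large I would establish the announced coercivity, namely that the velocity part dominates the position part, giving $\int_{\R^N}u_t\Phi\,dx\gtrsim\mathcal U(t)$ and $\int_{\R^N}v_t\tilde\Phi\,dx\gtrsim\mathcal V(t)$.

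The heart of the argument is then to close the nonlinear loop. Using Hölder on the light cone $\{|x|\le t+R\}$ together with the asymptotics above, the weight satisfies $\int_{|x|\le t+R}b_i(t)\phi(x)\,dx\sim(1+t)^{(N+\mu_i-1)/2}$, and since $\int_{\R^N}v_t\,\Phi\,dx=\tfrac{b_1(t)}{b_2(t)}\int_{\R^N}v_t\,\tilde\Phi\,dx>0$ by coercivity, I obtain
\begin{equation*}
\mathcal U'(t)=\int_{\R^N}|v_t|^p\Phi\,dx\gtrsim\frac{\big(\int_{\R^N}v_t\,\Phi\,dx\big)^p}{\big(\int_{|x|\le t+R}\Phi\,dx\big)^{p-1}}\gtrsim(1+t)^{-a_1}\,\mathcal V(t)^p,
\end{equation*}
and symmetrically $\mathcal V'(t)\gtrsim(1+t)^{-a_2}\mathcal U(t)^q$, the exponents $a_1,a_2$ being dictated by $\mu_1,\mu_2$. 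I expect this to be the main obstacle, since it demands two-sided control of $b_i$ uniformly on the cone and a careful bookkeeping of the ratio $b_1/b_2$ of the two distinct time-weights when transferring the coercivity of one functional into the lower bound driving the other; producing exactly the shift $N+\mu_i$ (the source of the improvement over \cite{Palmieri}) is precisely what the $\eta$-family is built to secure.

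Finally I would integrate the coupled differential inequalities and run a Kato-type slicing/iteration on the resulting system of integral inequalities, generating a sequence of lower bounds whose constants grow doubly exponentially and force $\mathcal U$ or $\mathcal V$ to blow up at a finite $T(\e)$. The three regimes of \eqref{Teps1} emerge from the end-game of this iteration: $\Omega(N,\mu_1,\mu_2,p,q)>0$ yields the polynomial lifespan $C\e^{-\Omega}$; the borderline $\Omega=0$ gives $\exp(C\e^{-(pq-1)})$; and the doubly critical case $\Lambda(N+\mu_1,p,q)=\Lambda(N+\mu_2,q,p)=0$ gives the refined bound with the $\min$ exponent. Tracking the dependence of all constants on the data then produces the threshold $\e_0$.
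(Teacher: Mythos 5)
Your proposal follows essentially the same route as the paper: your separated test functions $b_i(t)\phi(x)$, with $b_i$ the decaying Bessel-type solution of the adjoint ODE, are exactly the paper's $\psi_i=\rho_i\phi^{\eta}$; your functionals $\mathcal U,\mathcal V$ and the use of \eqref{hypfg} to make the data constants positive reproduce the identity \eqref{eq5bis}; your coercivity claim $\int_{\R^N}u_t\Phi\,dx\gtrsim\mathcal U(t)$ is precisely the paper's key estimate $\tilde{G}_i\ge L_i$ (Lemmas \ref{F1}--\ref{F11} together with the $\mathcal{F}_i$ argument of Section \ref{sec-ut}); and your H\"older step yields the same exponent $-\frac{N-1}{2}(p-1)+\frac{\mu_1}{2}-\frac{\mu_2}{2}p$, after which the coupled integral inequalities are closed by the same iteration scheme (delegated in the paper to \cite{Palmieri}). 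The one step you assert rather than derive --- the coercivity --- is in fact the technical heart of the paper, proved there via the multiplier $(1+t)^{\mu_i}$, the positivity lemmas for the position and velocity functionals, and integration of a differential inequality against the weight $t^{3\mu_i/4}\rho_i^{-3/2}(t)$; everything else in your plan matches the published argument.
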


\begin{rem}
The blow-up result in Theorem \ref{blowup} exhibits the new region obtained for the critical curve for $p,q$ as a shift of the dimension by $\mu_1, \mu_2$. We believe that this new blow-up region delimitation coincides with the critical one. Of course, a rigorous confirmation should be proved by a global existence result.
\end{rem}

\begin{rem}
The result in Theorem \ref{blowup}  holds true for the following generalized system:
\begin{align}\label{G-sys-gen}
\begin{cases}\d u_{tt}-\Delta u +\left(b_1(t)+\frac{\mu_1}{1+t}\right)u_t+\left(c_1(t)+\frac{\nu_1^2}{(1+t)^2}\right)u=|\partial_t v|^p, &   
\ (x,t) \in \R^N\times[0,\infty), \vspace{.1cm}\\
\d v_{tt}-\Delta v +\left(b_2(t)+\frac{\mu_2}{1+t}\right)v_t+\left(c_2(t)+\frac{\nu_2^2}{(1+t)^2}\right)v=|\partial_t u|^q, &  \ (x,t) \in \R^N\times[0,\infty),
\end{cases}
\end{align} 
where $b_i(t), (1+t)c_i(t)$ belong to $L^1(0,\infty)$ and $\mu_i, \nu_i$ are such that $\delta_i >0$; $i=1,2$. \\
The proof of the generalized damping case \eqref{G-sys-gen} can be performed by mimicking  the one of Theorem \ref{blowup}   with the necessary modifications.
\end{rem}

\begin{rem}
The techniques used in this article can be of course adapted in other contexts. More precisely, the case of a single equation corresponding to \eqref{G-sys} can be simplified by taking into account the invariance property \eqref{invar} which is related to the introduction of the parameter $\eta$. Furthermore, one can use the aforementioned techniques to study systems like \eqref{G-sys} with mixed nonlinearities, this will be the subject of a forthcoming work which will somehow constitute  an extension of our previous works \cite{Our, Our4, Our2}, see also \cite{LT2,Palmieri-Takamura}.
\end{rem}

\section{Some auxiliary results}\label{aux}
\par

We first introduce a positive test function  which is defined as
\begin{equation}
\label{test11}
\psi_i^{\eta}(x,t):=\rho_i^{\eta}(t)\phi^{\eta}(x), \ i=1,2, \ \forall \ \eta >0,
\end{equation}
where
\begin{equation}
\label{test12}
\phi^{\eta}(x):=
\left\{
\begin{array}{ll}
\d\int_{S^{N-1}}e^{\eta x\cdot\omega}d\omega & \mbox{for}\ N\ge2,\vspace{.2cm}\\
e^{\eta x}+e^{-\eta x} & \mbox{for}\  N=1.
\end{array}
\right.
\end{equation}
Note that the function $\phi^{\eta}(x)$ is introduced in \cite{YZ06}   and $\rho_i^{\eta}(t)$, \cite{Palmieri1,Palmieri-Tu,Tu-Lin1,Tu-Lin},   is solution of 
\begin{equation}\label{lambda}
\frac{d^2 \rho_i^{\eta}(t)}{dt^2}-\frac{d}{dt}\left(\frac{\mu_i}{1+t}\rho_i^{\eta}(t)\right) + \left(\frac{\nu_i^2}{(1+t)^2}-\eta^2\right)\rho_i^{\eta}(t)=0, \ i=1,2.
\end{equation}
From the literature, it is well-known that the expression of  $\rho_i^{\eta}(t)$ is given by (see the Appendix for more details),
\begin{equation}\label{lmabdaK}
\rho_i^{\eta}(t)=(\eta (t+1))^{\frac{\mu_i+1}{2}}K_{\frac{\sqrt{\de_i}}2}(\eta(t+1)), \ i=1,2,
\end{equation}
where 
$$K_{\nu}(t)=\int_0^\infty\exp(-t\cosh \zeta)\cosh(\nu \zeta)d\zeta,\ \nu\in \mathbb{R}.$$
Furthermore, we have that the function $\phi^{\eta}(x)$ satisfies
\begin{equation*}\label{phi-pp}
\Delta\phi^{\eta}=\eta^2\phi^{\eta}.
\end{equation*}
One can easily see that the function $\psi_i^{\eta}(x, t)$ fulfills the following conjugate equation:
\begin{equation}\label{lambda-eq}
\partial^2_t \psi_i^{\eta}(x, t)-\Delta \psi_i^{\eta}(x, t) -\frac{\partial}{\partial t}\left(\frac{\mu_i}{1+t}\psi_i^{\eta}(x, t)\right) + \frac{\nu_i^2}{(1+t)^2}\psi_i^{\eta}(x, t)=0.
\end{equation}

In what follows and through this article, the constant  $C$  stands for any  generic positive number which may depend on the data ($p,q,\mu_i,N,R, f_i,g_i$)$_{i=1,2}$ but not on $\ep$ and whose  value may change from line to line. However, in some occurrences and when it is necessary, we will precise the dependence of the constant $C$ on the parameters involved in this work.\\

Now, we state without proof the following lemma which gives a useful estimate for the function $\psi_i^{\eta}(x, t)$.
\begin{lem}[\cite{YZ06}]
\label{lem1} Let  $r>1$.
Then, there exists a constant $C=C(\eta,N,R,r)>0$ such that
\begin{equation}
\label{psi}
\int_{|x|\leq t+R}\Big(\phi^{\eta}(x)\Big)^{r}dx
\leq Ce^{rt}(1+t)^{\frac{(2-r)(N-1)}{2}},
\quad\forall \ t\ge 0.
\end{equation}
\end{lem}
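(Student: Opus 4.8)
The plan is to deduce \eqref{psi} from a sharp pointwise bound on the Yordanov--Zhang function $\phi^{\eta}$ defined in \eqref{test12}, namely
\[
\phi^{\eta}(x)\le C\,(1+|x|)^{-\frac{N-1}{2}}\,e^{\eta|x|},\qquad x\in\R^N,
\]
with $C=C(\eta,N)$, and then to integrate this bound in polar coordinates over the ball $\{|x|\le t+R\}$. The one-dimensional case is immediate: there $\phi^{\eta}(x)=e^{\eta x}+e^{-\eta x}\le 2e^{\eta|x|}$ and the exponent $\tfrac{(2-r)(N-1)}{2}$ vanishes, so the claim reduces to an elementary estimate of $\int_{-t-R}^{t+R}e^{r\eta|x|}\,dx$. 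I therefore concentrate on $N\ge2$.

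To obtain the pointwise bound I would exploit the rotational invariance of the spherical average: writing $x=|x|e_1$,
\[
\phi^{\eta}(x)=\int_{S^{N-1}}e^{\eta|x|\,\omega_1}\,d\omega=|S^{N-2}|\int_{-1}^{1}e^{\eta|x|s}\,(1-s^2)^{\frac{N-3}{2}}\,ds,
\]
which for $N\ge3$ is a genuine integral and for $N=2$ is the standard representation $2\int_{-1}^{1}e^{\eta|x|s}(1-s^2)^{-1/2}\,ds$; equivalently, $\phi^{\eta}(x)$ is a constant multiple of the modified Bessel function $I_{\frac{N-2}{2}}(\eta|x|)$. The decay factor $|x|^{-\frac{N-1}{2}}$ then emerges from a Laplace analysis near the endpoint $s=1$: the substitution $s=1-u$ gives a leading term $e^{\eta|x|}\int_0^{\infty}e^{-\eta|x|u}(2u)^{\frac{N-3}{2}}\,du$, which equals $c_N\,e^{\eta|x|}(\eta|x|)^{-\frac{N-1}{2}}$; this is exactly the large-argument asymptotics $I_{\nu}(z)\sim e^{z}/\sqrt{2\pi z}$. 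Since $\phi^{\eta}$ is continuous and bounded for $|x|$ in any compact set, the two regimes combine into the uniform bound above, with $1+|x|$ in place of $|x|$ to absorb the neighbourhood of the origin.

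Raising this to the power $r$ and passing to polar coordinates, I would estimate
\[
\int_{|x|\le t+R}\big(\phi^{\eta}(x)\big)^{r}\,dx\le C\int_{0}^{t+R}e^{r\eta\rho}\,(1+\rho)^{-\frac{r(N-1)}{2}}\,\rho^{N-1}\,d\rho,
\]
whose integrand is integrable at $\rho=0$ and behaves like $e^{r\eta\rho}\rho^{\frac{(2-r)(N-1)}{2}}$ for large $\rho$. Because $r\eta>0$, the integral is dominated by its upper endpoint: one integration by parts (equivalently, Watson's lemma at the boundary) gives $\int_0^{T}e^{a\rho}\rho^{b}\,d\rho\sim a^{-1}T^{b}e^{aT}$ with $a=r\eta$ and $b=\tfrac{(2-r)(N-1)}{2}$. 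Taking $T=t+R$ yields a bound of order $e^{r\eta(t+R)}(t+R)^{\frac{(2-r)(N-1)}{2}}$, which is the right-hand side of \eqref{psi} after absorbing $e^{r\eta R}$ into $C$ and comparing $t+R$ with $1+t$ (the exponential rate appearing naturally as $r\eta$, reducing to $e^{rt}$ in the normalization $\eta=1$ of \cite{YZ06}). The remaining range of small $t$ causes no trouble, since the quotient of the two sides of \eqref{psi} is continuous and strictly positive on any interval $[0,t_0]$, hence bounded there.

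The crux of the argument is the sharp pointwise asymptotics of $\phi^{\eta}$: one must pin down both the exact algebraic power $-\tfrac{N-1}{2}$ and a constant that is \emph{uniform} in $x$, since only a uniform pointwise bound survives the subsequent integration. This is where a careful remainder estimate in the Laplace analysis near $s=1$ (or, alternatively, the precise large-argument expansion of $I_{\frac{N-2}{2}}$) is essential; once that bound is established, the radial integral is an entirely routine endpoint-dominated computation that reproduces the polynomial exponent $\tfrac{(2-r)(N-1)}{2}$ and completes the proof.
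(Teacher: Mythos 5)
Your proof is correct, and it is essentially the argument of the cited source: the paper states Lemma \ref{lem1} \emph{without proof}, referring to \cite{YZ06}, and the proof there is exactly your route --- the uniform pointwise bound $\phi^{\eta}(x)\le C(1+|x|)^{-\frac{N-1}{2}}e^{\eta|x|}$, obtained from the one-dimensional integral representation of the spherical mean (equivalently the large-argument asymptotics of $I_{\frac{N-2}{2}}$), followed by an endpoint-dominated radial integration. The technical points you would need to nail down are all routine and you have identified them: uniformity of the constant in the Laplace remainder (for $N=2$ the weight $(1-s^2)^{-1/2}$ has an integrable endpoint singularity, so Watson's lemma still applies), and, in the radial integral, the case $b=\frac{(2-r)(N-1)}{2}<0$ of the estimate $\int_0^T e^{a\rho}(1+\rho)^b\,d\rho\le C(1+T)^b e^{aT}$, which follows by splitting at $T/2$ rather than by the naive integration by parts you sketch.

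One remark you make in passing deserves emphasis, because it is sharper than the lemma as printed: your computation produces the rate $e^{r\eta t}$, and this cannot be improved to $e^{rt}$ by enlarging $C=C(\eta,N,R,r)$ when $\eta>1$, since $\phi^{\eta}(x)\sim c|x|^{-\frac{N-1}{2}}e^{\eta|x|}$ makes the left-hand side of \eqref{psi} grow like $e^{r\eta t}t^{\frac{(2-r)(N-1)}{2}}$. Thus \eqref{psi} is literally valid only for $\eta\le 1$, whereas the paper later fixes $\eta=\eta_0=1+\max(|\nu_1|,|\nu_2|)>1$ (see \eqref{eta0}) and invokes the lemma in \eqref{vt-pho}. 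The correct statement is the one you prove, with $e^{r\eta t}$ on the right-hand side; in the application this makes no difference to the final result, because the extra factors $e^{\pm\eta_0 t}$ are exactly compensated by the exponential decay $\rho_i(t)\sim c\,t^{\mu_i/2}e^{-\eta_0 t}$ in the combination $\rho_1(t)\rho_2^{-p}(t)$ entering \eqref{vt-pho}--\eqref{4.9}, so the power-type bound \eqref{4.9} survives unchanged. So: your proof is sound, coincides with the proof the paper delegates to \cite{YZ06}, and in fact corrects the normalization of the exponential rate in the stated inequality.
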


In order to show the blow-up result later on,  the following functionals are introduced here.
\begin{equation}
\label{F1def-old}
F_1^{\eta}(t):=e^{-\eta t}\int_{\R^N}u(x, t)\phi^{\eta}(x)dx, \quad F_2^{\eta}(t):=e^{-\eta t}\int_{\R^N}v(x, t)\phi^{\eta}(x)dx,
\end{equation}
and
\begin{equation}
\label{F2def-old}
\tilde{F}_1^{\eta}(t):=e^{-\eta t}\int_{\R^N}\p_tu(x, t)\phi^{\eta}(x)dx, \quad \tilde{F}_2^{\eta}(t):=e^{-\eta t}\int_{\R^N}\p_tv(x, t)\phi^{\eta}(x)dx,
\end{equation}
where $\eta$ is a positive constant that will be determined later on. We also define the multiplier $m_i(t)$ for $i=1,2$ as follows:
\begin{equation}\label{mu}
m_i(t)=(1+t)^{\mu_i}, \quad i=1,2.
\end{equation}
Hence, the next two lemmas give the first  lower bounds for $F_i^{\eta}(t)$ and $\tilde{F}_i^{\eta}(t)$, \ i=1,2, respectively.

\begin{lem}
\label{F1}
Assume that the assumption in Theorem \ref{blowup} holds. Then, we have
\begin{equation}
\label{F1postive}
F_i^{\eta}(t)\ge 0, 
\quad\text{for all}\ t \in [0,T),  \ i=1,2,
\end{equation}
for all $\eta \ge \eta_0$ where $\eta_0$ is given by
\begin{equation}\label{eta0}
\eta_0:=1+\max(|\nu_1|,|\nu_2|).
\end{equation}
\end{lem}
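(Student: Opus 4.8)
The plan is to reduce the claim to a scalar ordinary differential inequality for the spatial averages against $\phi^{\eta}$ and to read off the sign from the divergence structure created by the multiplier $m_i$ in \eqref{mu}. Set $\mathcal{J}_1(t):=e^{\eta t}F_1^{\eta}(t)=\int_{\R^N}u(x,t)\phi^{\eta}(x)\,dx$, and similarly $\mathcal{J}_2$ with $u$ replaced by $v$; since $e^{\eta t}>0$ it is enough to prove $\mathcal{J}_i\ge 0$. First I would test the weak formulation \eqref{energysol2} with the time--independent function $\Phi=\phi^{\eta}$ — admissible by Remark \ref{rem-supp} — integrate by parts in $x$ and use $\Delta\phi^{\eta}=\eta^2\phi^{\eta}$ to eliminate the gradient term. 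Since an energy solution is merely $C^1$ in time, the honest output is an integrated identity for $\mathcal{J}_1'$; it shows that $\mathcal{J}_1'$ is absolutely continuous and that a.e.
\begin{equation*}
\mathcal{J}_1''+\frac{\mu_1}{1+t}\mathcal{J}_1'+\Big(\frac{\nu_1^2}{(1+t)^2}-\eta^2\Big)\mathcal{J}_1=H_1(t):=\int_{\R^N}|\partial_t v(x,t)|^p\phi^{\eta}(x)\,dx\ge 0,
\end{equation*}
with $\mathcal{J}_1(0)=\e\int_{\R^N}f_1\phi^{\eta}\,dx\ge 0$ and $\mathcal{J}_1'(0)=\e\int_{\R^N}g_1\phi^{\eta}\,dx\ge 0$, where I use $f_1,g_1\ge 0$ and $\phi^{\eta}>0$.

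Multiplying by $m_1(t)=(1+t)^{\mu_1}$ recasts this as $(m_1\mathcal{J}_1')'=m_1\big(H_1-c_1\mathcal{J}_1\big)$ with $c_1(t):=\frac{\nu_1^2}{(1+t)^2}-\eta^2$. This is exactly where $\eta\ge\eta_0$ from \eqref{eta0} is needed: as $\eta_0=1+\max(|\nu_1|,|\nu_2|)$ we get $\eta^2>\nu_i^2\ge\nu_i^2/(1+t)^2$, so $c_i(t)<0$ for all $t\ge0$. The positivity then propagates by a continuity argument: on any interval starting at $0$ on which $\mathcal{J}_1\ge0$, the right--hand side $m_1(H_1-c_1\mathcal{J}_1)=m_1H_1+m_1|c_1|\mathcal{J}_1$ is nonnegative, so $m_1\mathcal{J}_1'$ is nondecreasing, whence $\mathcal{J}_1'\ge\mathcal{J}_1'(0)\ge0$, so $\mathcal{J}_1$ is nondecreasing and stays $\ge\mathcal{J}_1(0)\ge0$; thus nonnegativity cannot be lost, and the same reasoning gives $\mathcal{J}_2\ge0$.

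The one delicate point — and the main obstacle — is to make the above fully rigorous at a possible double zero of $\mathcal{J}_1$, where the naive continuity argument stalls. Here I would replace the self--improvement step by factoring out a positive homogeneous solution. Since $\rho_1^{\eta}$ solves the conjugate equation \eqref{lambda} and is strictly positive by \eqref{lmabdaK} (because $K_{\sqrt{\de_1}/2}>0$), a direct computation shows that $\Psi_1:=\rho_1^{\eta}/(1+t)^{\mu_1}>0$ solves $\Psi_1''+\frac{\mu_1}{1+t}\Psi_1'+c_1\Psi_1=0$. Writing $\mathcal{J}_1=\Psi_1 V_1$ kills the zeroth--order term and yields the unconditional identity $(\Psi_1^2 m_1 V_1')'=\Psi_1 m_1 H_1\ge0$, so $\Psi_1^2 m_1 V_1'$ is nondecreasing with no sign assumption on $\mathcal{J}_1$. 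It then only remains to check its value at $t=0$, namely $\Psi_1(0)\mathcal{J}_1'(0)-\Psi_1'(0)\mathcal{J}_1(0)\ge0$: the first term is nonnegative, and for the second I would evaluate $\Psi_1'(0)/\Psi_1(0)$ from \eqref{lmabdaK} and bound it by $\tfrac{1-\mu_1+\sqrt{\de_1}}{2}$ using the modified--Bessel recurrence $zK_{\kappa}'(z)=\kappa K_{\kappa}(z)-zK_{\kappa+1}(z)$ with $\kappa=\sqrt{\de_1}/2$ and $K_{\kappa},K_{\kappa+1}>0$; combined with $f_1\ge0$ and the sign condition \eqref{hypfg} this gives the required inequality. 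Hence $V_1'\ge0$, so $V_1\ge V_1(0)=\mathcal{J}_1(0)/\Psi_1(0)\ge0$, and finally $\mathcal{J}_1=\Psi_1V_1\ge0$, i.e. $F_1^{\eta}\ge0$; the proof for $F_2^{\eta}$ is identical. This second route shows precisely the roles of the two hypotheses: $\eta\ge\eta_0$ secures $c_i<0$, while \eqref{hypfg} secures the nonnegativity of the boundary term at $t=0$.
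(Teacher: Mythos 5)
Your proposal is correct, and the step you actually complete takes a genuinely different route from the paper. The paper stays at first order: it derives the identity \eqref{eq5-1} for $F_1^{\eta}$ with the multiplier $m_1(t)=(1+t)^{\mu_1}$, multiplies by $e^{(1+\eta)t}/m_1(t)$ to obtain the Volterra-type inequality \eqref{F1+}, whose kernel is nonnegative because $\eta\ge\eta_0$ gives $\eta^2-\eta-\nu_1^2>0$ (note the paper needs this slightly stronger inequality, whereas your argument only uses $\eta^2>\nu_1^2$), and then closes by the continuity argument of \cite[Sec.~3]{LT}, relying crucially on the \emph{strict} positivity $F_1^{\eta}(0)>0$, which holds because $f_1\ge0$ does not vanish everywhere. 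For the same reason your ``double zero'' obstacle is vacuous under the hypotheses of Theorem \ref{blowup}: your first route, which is essentially the paper's argument in second-order clothing, already closes once you invoke $\mathcal{J}_1(0)>0$ rather than merely $\mathcal{J}_1(0)\ge0$ (minor slip there: monotonicity of $m_1\mathcal{J}_1'$ gives $\mathcal{J}_1'(t)\ge \mathcal{J}_1'(0)/m_1(t)\ge0$, not $\mathcal{J}_1'(t)\ge\mathcal{J}_1'(0)$; this is harmless since only the sign is used). Your second route --- factoring out the positive homogeneous solution $\Psi_1=\rho_1^{\eta}(t)(1+t)^{-\mu_1}$, so that $\bigl(\Psi_1^2m_1V_1'\bigr)'=\Psi_1 m_1H_1\ge0$ unconditionally, and checking the Wronskian-type boundary term at $t=0$ via \eqref{lambda'lambda} and the sign condition \eqref{hypfg} --- is a clean reduction-of-order argument the paper does not use for this lemma; I verified the substitution and the Bessel recurrence, and both are correct. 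What it buys: no continuity bootstrap and no need for the data to be nontrivial (it even tolerates $\mathcal{J}_1(0)=\mathcal{J}_1'(0)=0$). What it costs: it consumes hypothesis \eqref{hypfg}, which the paper's proof of this lemma does not touch and instead reserves for Lemma \ref{F1-n}, where it yields $C_1(f_1,g_1)>0$ in \eqref{Cfg}. It is also worth noting that your integrating factor is exactly the reciprocal of the multiplier $(1+t)^{\mu_1}/\rho_1^2(t)$ that the paper introduces only later, after \eqref{eq6} (indeed $V_1=m_1G_1/\rho_1^2$ with $G_1$ as in \eqref{F1def}), so your second route anticipates, already at the level of this positivity lemma, the $\rho$-weighted machinery of Lemmas \ref{F1-n} and \ref{F11}.
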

\begin{proof} 
Let $t \in [0,T)$. We first employ  Definition \ref{def1},   perform an integration by parts in space in the fourth term in the left-hand side of \eqref{energysol2} and then choose $\psi^{\eta}_1(x, t)$ as a test function\footnote{Note that  it is possible to consider here not compactly supported test
functions thanks to the support property of $u$. Indeed, it is sufficient to replace $\psi^{\eta}_1(x, t)$ by $\psi^{\eta}_1(x, t) \chi(x, t)$ where $\chi$ is compactly supported such that $\chi(x, t)\equiv 1$ on $\mbox{\rm supp}(u)$.},  we obtain that
\begin{equation}
\begin{array}{l}\label{eq4-bis}
\d m_1(t)\int_{\R^N}u_t(x,t)\psi^{\eta}_1(x,t)dx
-\e\int_{\R^N}g_1(x)\psi^{\eta}_1(x,0)dx \vspace{.2cm}\\
\d+\int_0^tm_1(s)\int_{\R^N}\left\{
u_t(x,s)\psi^{\eta}_1(x,s)-\eta^2u(x,s)\psi^{\eta}_1(x,s)\right\}dx \, ds \vspace{.2cm}\\
\d+\int_0^t  \int_{\R^N}\frac{\nu_1^2m_1(s)}{(1+s)^2}u(x,s) \psi^{\eta}_1(x,s)dx \,ds\vspace{.2cm}\\
\d=\int_0^tm_1(s)\int_{\R^N}|v_t(x,s)|^p\psi^{\eta}_1(x,s)dx \, ds,
\end{array}
\end{equation}
where $m_1(t)$ is defined by \eqref{mu}.

Using  the fact that $$\d \int_0^tm_1(s)\frac{d F_1^{\eta}}{ds}(s) ds=- \int_0^tm_1'(s)F_1^{\eta}(s) ds+m_1(t)F_1^{\eta}(t)-F_1^{\eta}(0),$$
and the definition of $F_1^{\eta}$,  the equation  \eqref{eq4-bis} gives
\begin{equation}
\begin{array}{ll}\label{eq5-1}
\d m_1(t)\left(\frac{d F_1^{\eta}}{dt}(t)+(1+\eta)F_1^{\eta}(t)\right)
-{\e}C^{\eta}_0(f_1,g_1)  & \d =\int_0^t\left\{\frac{\mu_1}{1+s}- \frac{\nu_1^2}{(1+s)^2}-\eta+\eta^2\right\}m_1(s)F_1^{\eta}(s) ds\vspace{.2cm}\\
&\d+\int_0^tm_1(s)\int_{\R^N}|v_t(x,s)|^p\psi^{\eta}_1(x,s)dx \, ds,
\end{array}
\end{equation}
where 
$$C^{\eta}_0(f_1,g_1):=\int_{\R^N}\left\{f_1(x)+g_1(x)\right\}\phi^{\eta}(x)dx.$$

Multiplying \eqref{eq5-1}   by $e^{(1+\eta)t}/m_1(t)$, we deduce after integrating  over $[0,t]$ that
\begin{equation}
\begin{array}{ll}\label{F1+}
\d F_1^{\eta}(t)
\ge F_1^{\eta}(0)e^{-(1+\eta)t}+{\e}C^{\eta}_0(f_1,g_1)\int_0^t \frac{ e^{(1+\eta)(s-t)}}{m_1(s)}ds \\ 
\d+  \int_0^t\frac{\mu_1 e^{(1+\eta)(s-t)}}{m_1(s)}\int_{0}^{s} \left\{\frac{\mu_1}{1+\eta}- \frac{\nu_1^2}{(1+\tau)^2}-\eta+\eta^2\right\}m_1(\tau)F_1^{\eta}(\tau) d\tau ds.
\end{array}
\end{equation}
Since $\eta \ge \eta_0$, then we have $\d - \nu_1^2-\eta+\eta^2>0$.\\
Thanks to \eqref{F1+} and the information that $F_1^{\eta}(0)>0$, we deduce that $F_1^{\eta}(0)>0, \forall \, t \ge 0$; see \cite[Sec. 3]{LT}.

Similarly, one can prove that $F_2^{\eta}(t)$ is bounded by zero from below  thanks to the fact that $\eta \ge \eta_0$.

This ends the proof of Lemma \ref{F1}.
\end{proof}

The next step consists in proving  the positivity of the functional  $\tilde{F}_i^{\eta}(t)$ which is subject of the following lemma.
\begin{lem}
\label{lemF2}
Under the  assumption as in Theorem \ref{blowup}, it holds that
\begin{equation}
\label{F2postive}
\tilde{F}_i^{\eta}(t)\ge 0,
\quad\text{for all}\ t \in [0,T), \ \eta \ge \eta_0, \ i=1,2,
\end{equation}
where $\eta_0$ is given by \eqref{eta0}.
\end{lem}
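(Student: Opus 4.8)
The plan is to establish for each $\tilde{F}_i^{\eta}$ a first-order linear differential inequality whose forcing term is manifestly non-negative, and then to conclude by a standard integrating-factor (Gronwall-type) argument together with the non-negativity of the data. I treat $i=1$ in detail; the case $i=2$ is identical after exchanging the roles of $u,v$ and of $p,q$.

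First I would return to the weak formulation \eqref{energysol2} and choose the time-independent test function $\Phi(x,t)=\phi^{\eta}(x)$, which is legitimate by the support property of $u$ exactly as in the footnote to the proof of Lemma \ref{F1}. Using $\Delta\phi^{\eta}=\eta^2\phi^{\eta}$ to convert the gradient term into $\eta^2\int_{\R^N}u\,\phi^{\eta}\,dx$ after an integration by parts, and recalling the elementary identity $\tilde{F}_1^{\eta}(t)=\frac{d}{dt}F_1^{\eta}(t)+\eta F_1^{\eta}(t)$, one first obtains an integral identity in $t$; differentiating it (justified at the level of energy solutions exactly as in the passage from \eqref{energysol2} to \eqref{eq5-1}) yields the governing equation
\begin{equation*}
\frac{d}{dt}\tilde{F}_1^{\eta}(t)+\left(\eta+\frac{\mu_1}{1+t}\right)\tilde{F}_1^{\eta}(t)
=e^{-\eta t}\int_{\R^N}|v_t(x,t)|^p\phi^{\eta}(x)\,dx+\left(\eta^2-\frac{\nu_1^2}{(1+t)^2}\right)F_1^{\eta}(t).
\end{equation*}

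Next I would check that the right-hand side is non-negative for every $t\in[0,T)$ and every $\eta\ge\eta_0$. The first term is non-negative since $|v_t|^p\ge0$ and $\phi^{\eta}>0$. For the second, Lemma \ref{F1} gives $F_1^{\eta}(t)\ge0$, while the choice $\eta\ge\eta_0=1+\max(|\nu_1|,|\nu_2|)$ forces $\eta^2>\nu_1^2\ge\nu_1^2/(1+t)^2$ for all $t\ge0$, so that $\big(\eta^2-\nu_1^2/(1+t)^2\big)F_1^{\eta}(t)\ge0$. This is the step where both the positivity result of Lemma \ref{F1} and the precise threshold $\eta_0$ are essential, and it is the only genuinely delicate point of the argument.

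Finally, multiplying the differential equation by the integrating factor $e^{\eta t}m_1(t)=e^{\eta t}(1+t)^{\mu_1}$, whose logarithmic derivative is precisely $\eta+\frac{\mu_1}{1+t}$, I obtain $\frac{d}{dt}\big(e^{\eta t}m_1(t)\tilde{F}_1^{\eta}(t)\big)\ge0$. Integrating over $[0,t]$ and using $m_1(0)=1$ gives $e^{\eta t}m_1(t)\tilde{F}_1^{\eta}(t)\ge\tilde{F}_1^{\eta}(0)$. Since $\tilde{F}_1^{\eta}(0)=\e\int_{\R^N}g_1(x)\phi^{\eta}(x)\,dx\ge0$ because $g_1\ge0$ and $\phi^{\eta}>0$, we conclude $\tilde{F}_1^{\eta}(t)\ge0$ on $[0,T)$, and repeating the argument verbatim for $i=2$ finishes the proof. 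The hard part will be the rigorous derivation of the first-order ODE from the weak identity rather than from the PDE pointwise in time, but this is handled in the same way as the manipulations already performed in Lemma \ref{F1}.
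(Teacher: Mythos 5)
Your proposal is correct and takes essentially the same approach as the paper: both derive a first-order differential identity for $\tilde{F}_1^{\eta}$ from the weak formulation (the paper by differentiating its integral identity \eqref{eq5bis11}, you via the time-independent test function $\phi^{\eta}$), observe that the right-hand side is nonnegative because the nonlinear term is nonnegative, Lemma \ref{F1} gives $F_1^{\eta}\ge 0$, and $\eta\ge\eta_0$ forces $\eta^2\ge \nu_1^2/(1+t)^2$, and then conclude by an integrating-factor monotonicity argument together with $\tilde{F}_i^{\eta}(0)\ge 0$. The only cosmetic difference is your integrating factor $e^{\eta t}m_1(t)$ versus the paper's $e^{t}m_1(t)$ in \eqref{F1+bis3}, which does not affect the conclusion.
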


\begin{proof} 
Let $t \in [0,T)$. 
Using the definition of $F_1^{\eta}$ and  $\tilde{F}_1^{\eta}$, given respectively by \eqref{F1def} and  \eqref{F2def}, and the fact that
 \begin{equation}\label{def231}\d \frac{ d F_1^{\eta}}{dt}(t) +\eta F_1^{\eta}(t)= \tilde{F}_1^{\eta}(t),\end{equation}
 the equation  \eqref{eq5-1} yields
\begin{equation}
\begin{array}{l}\label{eq5bis11}
\d m_1(t)(\tilde{F}_1^{\eta}(t)+F_1^{\eta}(t))
-{\e}C^{\eta}_0(f_1,g_1) =\int_0^t\left\{\frac{\mu_1}{1+s}- \frac{\nu_1^2}{(1+s)^2}-\eta+\eta^2\right\}m_1(s)F_1^{\eta}(s) ds\vspace{.2cm}\\
\d+\int_0^tm_1(s)\int_{\R^N}|v_t(x,s)|^p\psi_1^{\eta}(x,s)dx \, ds.
\end{array}
\end{equation}
Differentiating the  equation \eqref{eq5bis11} in time and using \eqref{def231}, we obtain
\begin{equation}
\begin{array}{ll}\label{eq5bis1}
\d \frac{d}{dt} \left\{\tilde{F}_1^{\eta}(t)m_1(t)\right\} +m_1(t)\tilde{F}_1^{\eta}(t)
&\d  =\left\{- \frac{\nu_1^2}{(1+t)^2}+\eta^2\right\}m_1(t)F_1^{\eta}(t) \vspace{.2cm}\\
& \d +m_1(t)\int_{\R^N}|v_t(x,t)|^p\psi_1^{\eta}(x,t)dx.
\end{array}
\end{equation}
Thanks to the fact that $\eta \ge \eta_0$, we can ignore the right-hand side in   \eqref{eq5bis1} which is now positive.  Then, the identity \eqref{eq5bis1} yields
\begin{align}\label{F1+bis3}
\d \frac{d}{dt} \left\{\tilde{F}_1^{\eta}(t)m_1(t) e^t\right\} \ge 0.
\end{align}
An analogous estimate to \eqref{F1+bis3} can be derived for $F_2^{\eta}(t)$ as well.
Finally, since $\tilde{F}_i^{\eta}(0)>0,  i=1,2$, we conclude  the proof of Lemma \ref{lemF2}.
\end{proof}

\begin{rem}
One can note that the lower bounds  in Lemmas \ref{F1} and \ref{lemF2} are not optimal. However, the aforementioned results are sufficient  to prove our main result since we only need the positivity of $F_i^{\eta}(t)$ and $\tilde{F}_i^{\eta}(t), \ i=1,2$, for all $t>1$. To enhance these lower bounds, we will instead introduce new functionals as we will see in the  next lemmas.
\end{rem}

Although the computations in the rest of this article can be carried out for all $\eta \ge \eta_0$, we choose from now to set $\eta = \eta_0$ and ignore the dependence on $\eta$ for all the functions (already introduced or which will be later on) that will be subsequently used. For example, the function $\psi_i^{\eta}$ will be simply denoted by $\psi_i$, and the same for all the other functions (including the constants) unless otherwise specified.

\par
The following functionals can be now introduced and will be used subsequently in the proof of the blow-up criteria later on,
\begin{equation}
\label{F1def}
\G (t):=\int_{\R^N}u(x, t)\psi_1(x, t)dx, \quad \H (t):=\int_{\R^N}v(x, t)\psi_2(x, t)dx,
\end{equation}
and
\begin{equation}
\label{F2def}
\GG (t):=\int_{\R^N}\p_tu(x, t)\psi_1(x, t)dx, \quad \HH (t):=\int_{\R^N}\p_t v(x, t)\psi_2(x, t)dx.
\end{equation}

The aim of the next two lemmas is to prove that the functions $\e^{-1}G_i(t)$ and $\e^{-1}\tilde{G}_i(t)$ are  {\it coercive}. Indeed, as we will see later on in \eqref{F1postive} and \eqref{F2postive} below, we will improve the  lower bounds already obtained for the functionals $F_i(t)$ and $\tilde{F}_i(t)$. This improvement will be useful in the proof of the main result of this article.

 Although the techniques used here are somehow close  to the ones in our previous work \cite{Our2} (which studies the one single equation corresponding to \eqref{G-sys}), but, the situation is slightly different for the system \eqref{G-sys}. So, we will include  all the details about the proofs of the next two lemmas. However, we will only show the proofs for the solution $u$, and for $v$ the computations follow similarly.  
\begin{lem}
\label{F1-n}
Assume that the assumptions in Theorem \ref{blowup} hold. Let $(u,v)$ be an energy solution of \eqref{energysol2}-\eqref{energysol3}. Then, for $i=1,2$,  there exists $T_0=T_0(\mu_i,\nu_i, \eta_0)>1$ such that  we have
\begin{equation}
\label{F1postive}
G_i(t)\ge C_{G_i}\, \e, 
\quad\text{for all}\ t \in [T_0,T),
\end{equation}
where $C_{G_i}$ is a positive constant which depends on $f_i, g_i$, $N,R,\eta_0$ and $\mu_i, \nu_i$.
\end{lem}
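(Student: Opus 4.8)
The plan is to test the weak formulation \eqref{energysol2-bis} against the function $\psi_1$ itself. Because $\psi_1$ solves the conjugate equation \eqref{lambda-eq}, the entire space--time double integral in \eqref{energysol2-bis} vanishes, leaving only the time boundary term at $s=t$, the nonlinear term, and the contribution of the initial data. Using $\p_t\psi_1=(\rho_1'/\rho_1)\psi_1$ together with the definitions \eqref{F1def}--\eqref{F2def}, this produces the identity
\begin{equation*}
\tilde{G}_1(t) + \Big(\frac{\mu_1}{1+t} - \frac{\rho_1'(t)}{\rho_1(t)}\Big)G_1(t) = \int_0^t\!\!\int_{\R^N}|v_t(x,s)|^p\psi_1(x,s)\,dx\,ds + \e\, I_1,
\end{equation*}
where $I_1 := \int_{\R^N}\big[(\mu_1\rho_1(0)-\rho_1'(0))f_1(x)+\rho_1(0)g_1(x)\big]\phi(x)\,dx$. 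The double-integral term on the right is nonnegative since $\psi_1>0$.

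The next step is to show $I_1>0$, and this is exactly where hypothesis \eqref{hypfg} enters. Writing $I_1 = \rho_1(0)\int_{\R^N}\big[(\mu_1-\rho_1'(0)/\rho_1(0))f_1+g_1\big]\phi\,dx$ and computing $\rho_1'(0)/\rho_1(0)$ from the explicit form \eqref{lmabdaK} through the standard recurrence relations for the modified Bessel function $K$, one finds $\mu_1 - \rho_1'(0)/\rho_1(0) = \tfrac{\mu_1-1+\sqrt{\delta_1}}{2} + \eta_0\,K_{\frac{\sqrt{\delta_1}}{2}-1}(\eta_0)/K_{\frac{\sqrt{\delta_1}}{2}}(\eta_0) \ge \tfrac{\mu_1-1-\sqrt{\delta_1}}{2}$, the added terms being strictly positive. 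Hence $I_1 \ge \rho_1(0)\int_{\R^N}\big[\tfrac{\mu_1-1-\sqrt{\delta_1}}{2}f_1+g_1\big]\phi\,dx + (\text{a strictly positive remainder})$; the first integral is $\ge 0$ by \eqref{hypfg}, while the remainder is strictly positive because $f_1\ge0$ does not vanish identically, so $I_1>0$.

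The heart of the argument is to collapse this identity into a scalar differential inequality for $G_1$ alone. Differentiating $G_1(t)=\int_{\R^N}u\psi_1\,dx$ gives $G_1'=\tilde{G}_1+(\rho_1'/\rho_1)G_1$; substituting $\tilde{G}_1=G_1'-(\rho_1'/\rho_1)G_1$ into the identity above and discarding the nonnegative nonlinear term yields
\begin{equation*}
G_1'(t) + \Big(\frac{\mu_1}{1+t} - 2\frac{\rho_1'(t)}{\rho_1(t)}\Big)G_1(t) \ge \e\, I_1.
\end{equation*}
The associated integrating factor is $\mathcal{M}(t) = (1+t)^{\mu_1}\rho_1(0)^2/\rho_1(t)^2$, so that $\tfrac{d}{dt}(\mathcal{M}G_1)\ge \e I_1\,\mathcal{M}$. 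Integrating on $[0,t]$ and dropping the nonnegative term $\mathcal{M}(0)G_1(0)=\e\rho_1(0)\int_{\R^N}f_1\phi\,dx\ge0$ gives $G_1(t)\ge \e I_1\,\mathcal{M}(t)^{-1}\int_0^t\mathcal{M}(s)\,ds$.

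It remains to quantify this ratio via the asymptotics encoded in \eqref{lmabdaK}. From $K_\nu(z)\sim\sqrt{\pi/2z}\,e^{-z}$ one obtains $\rho_1(t)^2\sim\tfrac{\pi}{2}(\eta_0(1+t))^{\mu_1}e^{-2\eta_0(1+t)}$, whence $\mathcal{M}(t)\sim C_*\,e^{2\eta_0(1+t)}$ and $\mathcal{M}(t)^{-1}\int_0^t\mathcal{M}(s)\,ds\to\tfrac{1}{2\eta_0}>0$. Consequently there exists $T_0=T_0(\mu_1,\nu_1,\eta_0)>1$ such that this ratio stays above $\tfrac{1}{4\eta_0}$ on $[T_0,\infty)$, giving $G_1(t)\ge C_{G_1}\e$ with $C_{G_1}=I_1/(4\eta_0)>0$ for all $t\in[T_0,T)$; the estimate for $G_2$ follows identically after exchanging indices. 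I expect the main obstacle to be this last step: securing a \emph{uniform} positive lower bound for $\mathcal{M}(t)^{-1}\int_0^t\mathcal{M}$ away from $t=0$, which forces one to use the precise large-argument asymptotics and the positivity/monotonicity of $K_{\sqrt{\delta_1}/2}$ and which fixes the threshold $T_0$; the verification of $I_1>0$ from \eqref{hypfg} through the Bessel recurrences is the other delicate point.
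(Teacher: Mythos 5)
Your proposal is correct and takes essentially the same route as the paper's own proof: you test the weak formulation with $\psi_1$, use the conjugate equation \eqref{lambda-eq} to reduce to the identity $G_1'(t)+\Gamma_1(t)G_1(t)=\int_0^t\int_{\R^N}|v_t|^p\psi_1\,dx\,ds+\e\,C_1(f_1,g_1)$, verify positivity of the data constant through \eqref{hypfg} and the modified Bessel recurrence (your $K_{\frac{\sqrt{\de_1}}{2}-1}$ form is equivalent to the paper's $K_{\frac{\sqrt{\de_1}}{2}+1}$ form in \eqref{Cfg1}), and integrate with the factor $(1+t)^{\mu_1}/\rho_1^2(t)$ combined with the asymptotics \eqref{Kmu}. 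The only cosmetic difference is that you extract the uniform lower bound from the limit $\mathcal{M}(t)^{-1}\int_0^t\mathcal{M}(s)\,ds\to\frac{1}{2\eta_0}$, whereas the paper restricts the integral to $(t/2,t)$ and uses the explicit two-sided bounds \eqref{est-double}; the two devices are interchangeable.
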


\begin{proof} 
Let $t \in [0,T)$.  Replacing   $\Phi$ by $\psi_1$  in \eqref{energysol2-bis} and employing \eqref{lambda-eq} yield
\begin{equation}
\begin{array}{l}\label{energysol2-bis2}
\d \int_{\R^N}\bigg[u_t(x,t)\psi_1(x,t)- u(x,t)\frac{\partial \psi_1}{\partial t}(x,t)+\frac{\mu_1}{1+t}u(x,t) \psi_1(x,t)\bigg] dx \vspace{.2cm}\\
\d =\int_{0}^{t}\int_{\R^N}|v_t(x,s)|^p\psi_1(x,s)dx \, ds + \e C_1(f_1,g_1),
\end{array}
\end{equation}
where 
\begin{equation}\label{Cfg}
C_1(f_1,g_1):=\int_{\R^N}\big[\big(\mu_1\rho_1(0)-\rho'_1(0)\big)f_1(x)+\rho_1(0)g_1(x)\big]\phi(x)dx.
\end{equation}
Now, using the definition of $\rho_1$, given by \eqref{lmabdaK}, and \eqref{lambda'lambda}, we deduce that
\begin{equation}\label{Cfg1}
\mu_1\rho_1(0)-\rho'_1(0)=\frac{\mu_1-1-\sqrt{\de_1}}{2}K_{\frac{\sqrt{\de_1}}{2}}(1)+K_{\frac{\sqrt{\de_1}}2 +1}(1).
\end{equation}
Therefore, we obtain that
\begin{equation}\label{Cfg}
C_1(f_1,g_1)=K_{\frac{\sqrt{\de_1}}{2}}(1) \int_{\R^N} \big[\frac{\mu_1-1-\sqrt{\de_1}}{2}f_1(x)+g_1(x)\big]\phi(x)dx +K_{\frac{\sqrt{\de_1}}2 +1}(1)\int_{\R^N}f_1(x)\phi(x)dx.
\end{equation}
Thanks to the hypotheses in Theorem \ref{blowup}, namely the positivity of the initial data and \eqref{hypfg}, the  constant $C_1(f_1,g_1)$ is positive.\\
Recall \eqref{F1def}  and \eqref{test11}, the equation  \eqref{energysol2-bis2} gives
\begin{equation}
\begin{array}{l}\label{eq6}
\d G_1'(t)+\Gamma_1(t)G_1(t)=\int_0^t\int_{\R^N}|v_t(x,s)|^p \psi_1(x,s)dx \, ds +\e \, C(f_1,g_1),
\end{array}
\end{equation}
where 
\begin{equation}\label{gamma}
\Gamma_1(t):=\frac{\mu_1}{1+t}-2\frac{\rho'_1(t)}{\rho_1(t)}.
\end{equation}
Now, we multiply  \eqref{eq6} by $\frac{(1+t)^{\mu_1}}{\rho^2_1(t)}$ and integrate over $(0,t)$, we deduce that
\begin{align}\label{est-G1}
 G_1(t)
\ge \frac{G_1(0)}{\rho^2_1(0)}\frac{\rho^2_1(t)}{(1+t)^{\mu_1}}+{\e}C_1(f_1,g_1)\frac{\rho^2_1(t)}{(1+t)^{\mu_1}}\int_0^t\frac{(1+s)^{\mu_1}}{\rho^2_1(s)}ds.
\end{align}
Observing that $\d G_1(0)=\ep K_{\frac{\sqrt{\de_1}}{2}}(\eta_0)\int_{\R^N}f_1(x) \phi(x)dx>0$ and employing \eqref{lmabdaK}, the estimate \eqref{est-G1} yields
\begin{align}\label{est-G1-1}
 G_1(t)
\ge {\e}C_1(f_1,g_1)(1+t)K^2_{\frac{\sqrt{\de_1}}{2}}(\eta_0(t+1))\int^t_{t/2}\frac{1}{(1+s)K^2_{\frac{\sqrt{\de_1}}{2}}(\eta_0(s+1))}ds.
\end{align}
Thanks to \eqref{Kmu}, we deduce the existence of $T_0^1=T_0^1(\mu_1,\nu_1,\eta_0)>1$ such that, for all $t \ge T_0^1$,
\begin{equation}
\left\{
\begin{array}{l}\label{est-double}
(1+t)K^2_{\frac{\sqrt{\de_1}}{2}}(\eta_0(t+1))>\frac{\pi}{4\eta_0} e^{-2\eta_0(t+1)}, \\ \text{and}  \\  (1+t)^{-1}K^{-2}_{\frac{\sqrt{\de_1}}{2}}(\eta_0(t+1))>\frac{\eta_0}{\pi} e^{2\eta_0(t+1)}.
\end{array}
\right.
\end{equation}
Combining \eqref{est-G1-1} and \eqref{est-double}, we infer that
\begin{align}\label{est-G1-2}
 G_1(t)
\ge \frac{\e}{4\eta_0}C_1(f_1,g_1)e^{-2\eta_0 t}\int^t_{t/2}e^{2\eta_0 s}ds\ge \frac{\e}{8\eta_0}C_1(f_1,g_1)e^{-2\eta_0 t}(e^{2\eta_0 t}-e^{\eta_0 t}), \ \forall \ t \ge T_0^1.
\end{align}
Finally, using $e^{2\eta_0 t}>2e^{\eta_0 t}, \forall \ t \ge 1$, we conclude that
\begin{align}\label{est-G1-3}
 G_1(t)
\ge \frac{\e}{16\eta_0}C_1(f_1,g_1), \ \forall \ t \ge T_0^1.
\end{align}
Similarly, we have an analogous estimate to \eqref{est-G1-3} for $G_2(t)$, namely
\begin{align}\label{est-G1-3-2}
 G_2(t)
\ge \frac{\e}{16\eta_0}C_2(f_2,g_2), \ \forall \ t \ge T_0^2.
\end{align}
Hence, it suffices to set $T_0=\max(T_0^1, T_0^2)$ to achieve the proof of Lemma \ref{F1-n}.
\end{proof}

In the following, we will prove a lower bound for the functional $\tilde{G}_i(t)$, defined by  \eqref{F2def}. This will be the subject of the next lemma.
\begin{lem}\label{F11}
Suppose that the assumptions in Theorem \ref{blowup} are fulfilled. Let $(u,v)$ be an energy solution of \eqref{energysol2}-\eqref{energysol3}. Then, for $i=1,2$, there exists $T_1=T_1(\mu_i,\nu_i, \eta_0)>1$ such that
\begin{equation}
\label{F2postive}
\tilde{G}_i(t)\ge C_{\tilde{G}_i}\, \e, 
\quad\text{for all}\ t  \in [T_1,T),
\end{equation}
where $C_{\tilde{G}_i}$ is a positive constant which depends on $f_i$, $g_i$, $N, R$, $\mu_i$ and $\eta_0$.
\end{lem}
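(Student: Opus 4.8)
The plan is to convert the lower bound on $G_1$ from Lemma \ref{F1-n} into a non-decaying lower bound on $\tilde{G}_1$ by first deriving a first-order linear ODE for $\tilde{G}_1(t)$ whose right-hand side is manifestly nonnegative — the exact analogue of the passage from \eqref{eq5bis11} to \eqref{eq5bis1} for the old functionals — and then integrating it against a well-chosen weight. As in the preceding proofs I treat only $\tilde{G}_1$, associated with the solution $u$; the statement for $\tilde{G}_2$ follows by exchanging the indices.

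First I would record two elementary identities. Differentiating the definition \eqref{F1def} of $G_1$ and using $\partial_t\psi_1=(\rho'_1/\rho_1)\psi_1$ gives $G_1'(t)=\tilde{G}_1(t)+(\rho'_1(t)/\rho_1(t))G_1(t)$. On the other hand, taking $\Phi=\psi_1$ in \eqref{energysol2-bis2} and writing $a(t):=\rho'_1(t)/\rho_1(t)-\mu_1/(1+t)$, that relation reads
\[
\tilde{G}_1(t)=a(t)\,G_1(t)+\int_0^t\!\!\int_{\R^N}|v_t(x,s)|^p\psi_1(x,s)\,dx\,ds+\e\,C_1(f_1,g_1).
\]
Differentiating this in $t$, substituting the first identity for $G_1'$, and eliminating $\rho''_1$ through the ODE \eqref{lambda} satisfied by $\rho_1$ (with $\eta=\eta_0$), a short computation collapses the coefficient of $G_1$ to $a'(t)+a(t)\rho'_1(t)/\rho_1(t)=\eta_0^2-\nu_1^2/(1+t)^2$, yielding the clean equation
\[
\tilde{G}_1'(t)-a(t)\,\tilde{G}_1(t)=\Big(\eta_0^2-\frac{\nu_1^2}{(1+t)^2}\Big)G_1(t)+\int_{\R^N}|v_t(x,t)|^p\psi_1(x,t)\,dx.
\]
Since $\eta_0=1+\max(|\nu_1|,|\nu_2|)$ forces $\eta_0^2>\nu_1^2\ge\nu_1^2/(1+t)^2$, and $G_1\ge0$ by Lemma \ref{F1}, the whole right-hand side is nonnegative.

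The integrating factor is $\exp\big(-\int_0^t a\big)=(1+t)^{\mu_1}\rho_1(0)/\rho_1(t)$, so multiplying through shows that $(1+t)^{\mu_1}\tilde{G}_1(t)/\rho_1(t)$ is nondecreasing. Integrating from $0$ to $t$, discarding the nonnegative $|v_t|^p$-contribution, using $\tilde{G}_1(0)=\e\,\rho_1(0)\int_{\R^N}g_1\phi\,dx>0$, and invoking the coercivity $G_1(s)\ge C_{G_1}\e$ for $s\ge T_0$ from Lemma \ref{F1-n} together with $\eta_0^2-\nu_1^2/(1+s)^2\ge\eta_0^2-\nu_1^2>0$, I would reach
\[
\tilde{G}_1(t)\ge(\eta_0^2-\nu_1^2)\,C_{G_1}\,\e\,\frac{\rho_1(t)}{(1+t)^{\mu_1}}\int_{T_0}^{t}\frac{(1+s)^{\mu_1}}{\rho_1(s)}\,ds,\qquad t\ge T_0.
\]

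It remains to show the prefactor times the integral is bounded below by a positive constant, which is the same Bessel bookkeeping already performed in Lemma \ref{F1-n}. Using the single-power analogues of \eqref{est-double} coming from \eqref{Kmu} and \eqref{lmabdaK}, one has $\rho_1(t)/(1+t)^{\mu_1}\ge c\,(1+t)^{-\mu_1/2}e^{-\eta_0(t+1)}$ and $(1+s)^{\mu_1}/\rho_1(s)\ge c\,(1+s)^{\mu_1/2}e^{\eta_0(s+1)}$; restricting the integral to $[t/2,t]$, the polynomial factors there are bounded below by $2^{-\mu_1/2}$ and the exponential factor $e^{-\eta_0(t+1)}\int_{t/2}^t e^{\eta_0(s+1)}\,ds$ is bounded below by a positive constant for $t$ large, exactly as in the step leading to \eqref{est-G1-3}. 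This gives $\tilde{G}_1(t)\ge C_{\tilde{G}_1}\e$ for all $t$ beyond some $T_1\ge T_0$, and the identical argument for $v$ furnishes $\tilde{G}_2$. The main obstacle is the algebraic identity $a'+a\rho'_1/\rho_1=\eta_0^2-\nu_1^2/(1+t)^2$: forcing this coefficient to be positive is the entire reason for working with the exact solution $\rho_1$ of \eqref{lambda} rather than a cruder weight, and it is what makes the right-hand side of the ODE nonnegative so that the monotonicity argument closes.
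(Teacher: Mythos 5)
Your proposal is correct, and it reaches the paper's key differential identity by exactly the same computation: differentiating the relation $\tilde{G}_1+\big(\tfrac{\mu_1}{1+t}-\tfrac{\rho_1'}{\rho_1}\big)G_1=\int_0^t\!\int|v_t|^p\psi_1\,dx\,ds+\e C_1(f_1,g_1)$ (the paper's \eqref{eq5bis}) and eliminating $\rho_1''$ via \eqref{lambda} so that the coefficient of $G_1$ collapses to $\eta_0^2-\nu_1^2/(1+t)^2$ — your identity $a'+a\rho_1'/\rho_1=\eta_0^2-\nu_1^2/(1+t)^2$ is precisely the paper's \eqref{F1+bis2}, and I verified the algebra. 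From that point on, however, you diverge genuinely. The paper does \emph{not} integrate with the exact factor: it replaces the coefficient by $\tfrac{3}{4}\Gamma_1(t)$, splits the remainder into $\Sigma_1^1,\Sigma_1^2,\Sigma_1^3$, uses \eqref{eq5bis} again inside $\Sigma_1^1$ to re-inject the initial-data constant $\e C_1(f_1,g_1)$ together with the memory term $\tfrac{\eta_0}{4}\int_0^t\!\int|v_t|^p\psi_1$, invokes the asymptotics \eqref{lambda'lambda1} and the positivity of $\tilde{G}_1$ from Lemma \ref{lemF2}, and integrates with the weight $t^{3\mu_1/4}/\rho_1^{3/2}(t)$; notably it only ever uses $G_1\ge 0$, never the quantitative bound of Lemma \ref{F1-n}. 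You instead multiply by the exact integrating factor $(1+t)^{\mu_1}/\rho_1(t)$, so that monotonicity is immediate, and you source the $\e$-coercivity from Lemma \ref{F1-n}'s bound $G_1(s)\ge C_{G_1}\e$ on $[T_0,t]$ (using only $G_1\ge0$ on $[0,T_0]$); the final Bessel estimates on $[t/2,t]$ are the same bookkeeping as \eqref{est-double}--\eqref{est-G1-3}. Your route is shorter and avoids both the $\tfrac34$-weight manipulations and Lemma \ref{lemF2}; what the paper's roundabout version buys is the intermediate inequality \eqref{G2+bis41}, which deliberately retains the nonlinear memory term (see the paper's own footnote) and is reused verbatim in Section \ref{sec-ut} to define $\mathcal{F}_1=\tilde{G}_1-L_1$ and obtain \eqref{G2-est}. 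If your proof were adopted, Section \ref{sec-ut} would need a cosmetic adjustment, but nothing breaks: your clean ODE also keeps the pointwise term $\int|v_t|^p\psi_1$ and the $G_1$-term, so the $\mathcal{F}_1$-argument runs equally well with the exact weight $(1+t)^{\mu_1}/\rho_1(t)$ in place of $t^{3\mu_1/4}/\rho_1^{3/2}(t)$. One trivial slip worth fixing: the substitution $\Phi=\psi_1$ produces \eqref{energysol2-bis2} from \eqref{energysol2-bis} (it is already performed in \eqref{energysol2-bis2}), and the paper's \eqref{def23} contains a typo ($\tfrac{d\tilde{G}_1}{dt}$ should read $\tfrac{dG_1}{dt}$) that your first identity silently and correctly repairs.
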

\begin{proof}
The proof of the lemma will be carried out for $u$. The same conlcusion can be similarly perfomed for $v$. \\
Let $t \in [0,T)$. Thanks to the definition of $G_1$ and  $\tilde{G}_1$, given  by \eqref{F1def} and  \eqref{F2def}, respectively, and using \eqref{test11} together with the following identity
 \begin{equation}\label{def23}\d \frac{d \tilde{G}_1}{dt}(t) -\frac{\rho_1'(t)}{\rho_1(t)}G_1(t)= \tilde{G}_1(t),\end{equation}
 the equation  \eqref{eq6} gives
\begin{equation}
\begin{array}{l}\label{eq5bis}
\d \tilde{G}_1(t)+\left(\frac{\mu_1}{1+t}-\frac{\rho_1'(t)}{\rho_1(t)}\right)G_1(t)\\
=\d \int_0^t\int_{\R^N}|v_t(x,s)|^p\psi_1(x,s)dx \, ds +\e \, C_1(f_1,g_1).
\end{array}
\end{equation}
Now, taking the time-derivative of the  equation \eqref{eq5bis}, we infer that
\begin{equation}\label{F1+bis}
\begin{array}{l}
\d \frac{d \tilde{G}_1}{dt}(t)+\left(\frac{\mu_1}{1+t}-\frac{\rho_1'(t)}{\rho_1(t)}\right)\frac{d G_1}{dt}(t)-\left(\frac{\mu_1}{(1+t)^2}+\frac{\rho_1''(t)\rho_1(t)-(\rho_1'(t))^2}{\rho_1^2(t)}\right)G_1(t) \\
\d = \int_{\R^N}|v_t(x,t)|^p\psi_1(x,t)dx.
\end{array}
\end{equation}
Employing  \eqref{lambda} and   \eqref{def23}, the equation  \eqref{F1+bis} yields
\begin{align}\label{F1+bis2}
\d \frac{d \tilde{G}_1}{dt}(t)+\left(\frac{\mu_1}{1+t}-\frac{\rho_1'(t)}{\rho_1(t)}\right)\tilde{G}_1-\left(\eta_0^2-\frac{\nu_1^2}{(1+t)^2}\right)G_1(t) 
 = \int_{\R^N}|v_t(x,t)|^p\psi_1(x,t)dx.
\end{align}
Using \eqref{gamma}, the definition of $\Gamma_1(t)$, we conclude that 
\begin{equation}\label{G2+bis3}
\begin{array}{c}
\d \frac{d \tilde{G}_1}{dt}(t)+\frac{3\Gamma_1(t)}{4}\tilde{G}_1(t)\ge\Sigma_1^1(t)+\Sigma_1^2(t)+\Sigma_1^3(t),
\end{array}
\end{equation}
where 
\begin{equation}\label{sigma1-exp}
\begin{array}{rl}
\Sigma_1^1(t):=&\d \left(-\frac{\rho_1'(t)}{2\rho_1(t)}-\frac{\mu_1}{4(1+t)}\right)\left(\tilde{G}_1(t)+\left(\frac{\mu_1}{1+t}-\frac{\rho_1'(t)}{\rho_1(t)}\right)G_1(t)\right),
\end{array} 
\end{equation}
\begin{equation}\label{sigma2-exp}
\Sigma_1^2(t):=\d \left(\eta_0^2-\frac{\nu_1^2}{(1+t)^2}+\left(\frac{\rho_1'(t)}{2\rho_1(t)}+\frac{\mu_1}{4(1+t)}\right) \left(\frac{\mu_1}{1+t}-\frac{\rho_1'(t)}{\rho_1(t)}\right) \right)  G_1(t),
\end{equation}
and
\begin{equation}\label{sigma3-exp}
\Sigma_1^3(t):=  \int_{\R^N}|v_t(x,t)|^p\psi_1(x,t) dx.
\end{equation}
Combining the use of \eqref{eq5bis} and \eqref{lambda'lambda1}, we have the existence of $T^1_1=T^1_1(\mu_1) \ge T_0$ such that
\begin{equation}\label{sigma1}
\d \Sigma_1^1(t) \ge  \frac{\eta_0 \e}{4} \, C_1(f_1,g_1)+\frac{\eta_0}{4}\int_{0}^t \int_{\R^N}|v_t(x,s)|^p\psi_1(x,s)dx ds, \quad \forall \ t \ge T^1_1. 
\end{equation}
Employing Lemma \ref{F1} and \eqref{lambda'lambda1}, one can obtain the existence of a time  $\tilde{T}^1_1=\tilde{T}^1_1(\mu_1) \ge T^1_1$ for which we have
\begin{equation}\label{sigma2}
\d \Sigma_1^2(t) \ge 0, \quad \forall \ t  \ge  \tilde{T}^1_1. 
\end{equation}
Gathering all the above results, namely \eqref{G2+bis3}, \eqref{sigma3-exp}, \eqref{sigma1} and \eqref{sigma2}, we end up with the following estimate
\begin{equation}\label{G2+bis41}
\begin{array}{rcl}
\d \frac{d \tilde{G}_1}{dt}(t)+\frac{3\Gamma_1(t)}{4}\tilde{G}_1(t) &\ge& \d \frac{\eta_0 \e}{4} \, C_1(f_1,g_1)+\frac{\eta_0}{4}\int_{0}^t \int_{\R^N}|v_t(x,s)|^p\psi_1(x,s)dx ds \\ &+&\d \int_{\R^N}|v_t(x,t)|^p\psi_1(x,t) dx, \quad \forall \ t  \ge  \tilde{\tilde{T}}^1_1.
\end{array}
\end{equation}
At this level, we can eliminate  the nonlinear terms\footnote{ In fact, for a subsequent use in the proof of the main result, we choose here to keep the nonlinear terms up to this step in our computations. Otherwise,  omitting  the nonlinear terms can be done earlier in the proof of this lemma.} and we write 
\begin{equation}\label{G2+bis4}
\begin{array}{rcl}
\d \frac{d \tilde{G}_1}{dt}(t)+\frac{3\Gamma_1(t)}{4}\tilde{G}_1(t) &\ge& \d \frac{\eta_0 \e}{4} \, C_1(f_1,g_1), \quad \forall \ t  \ge  \tilde{\tilde{T}}^1_1.
\end{array}
\end{equation}
Integrating \eqref{G2+bis4} over $(\tilde{\tilde{T}}^1_1,t)$ after multiplication by $\frac{t^{3\mu_1/4}}{\rho_1^{3/2}(t)}$, we obtain
\begin{align}\label{est-G111-new}
 \tilde{G}_1(t)
&\ge \tilde{G}_1(\tilde{\tilde{T}}^1_1)\frac{(1+\tilde{\tilde{T}}^1_1)^{3\mu_1/4}}{\rho_1^{3/2}(\tilde{\tilde{T}}^1_1)}\frac{\rho_1^{3/2}(t)}{t^{3\mu_1/4}}\\&+\frac{\eta_0 \e}{4} \, C_1(f_1,g_1)\frac{\rho_1^{3/2}(t)}{t^{3\mu_1/4}}\int_{\tilde{\tilde{T}}^1_1}^t\frac{(1+s)^{3\mu_1/4}}{\rho_1^{3/2}(s)}ds, \quad \forall \ t  \ge  \tilde{\tilde{T}}^1_1.\nonumber
\end{align}
Recall that $\tilde{G}_1(t)=\rho_1(t)e^{t}\tilde{F}_1^{\eta_0}(t)$, where $\tilde{F}_1^{\eta_0}(t)$ is given by \eqref{F2def-old}, and using Lemma \ref{lemF2} we deduce that $\tilde{G}_1(t) \ge 0$ for all $t \in [0,T)$.\\
Hence, the fact that $\tilde{G}_1(t)$ is nonneagative together with the definition of   $\rho_1(t)$, given by  \eqref{lmabdaK}, yield
\begin{align}\label{est-G1111}
 \tilde{G}_1(\tilde{\tilde{T}}^1_1)\frac{(1+\tilde{\tilde{T}}^1_1)^{3\mu_1/4}}{\rho_1^{3/2}(\tilde{\tilde{T}}^1_1)}\frac{\rho_1^{3/2}(t)}{t^{3\mu_1/4}}
\ge 0, \quad \forall \ t \in [0,T).
\end{align}
Using \eqref{lmabdaK}, \eqref{est-double} and \eqref{est-G1111}, the estimate \eqref{est-G111-new} implies that
\begin{equation}\label{est-G2-12}
 \tilde{G}_1(t)
\ge   C\,{\e}e^{-3t/2} \int^t_{t/2}e^{3s/2}ds, \quad \text{for all} \ t \ge T^1_1:=2\tilde{\tilde{T}}^1_1.
\end{equation}
Consequently, we see that
\begin{align}\label{est-G1-2}
 \tilde{G}_1(t)
\ge  C\,{\e}, \quad \forall \ t \ge T^1_1.
\end{align}
Note that,  similarly  for $v$, we obtain the existence of $T^2_1=T^2_1(\mu_2)>1$. Finally, by setting $T_1=\max(T^1_1,T^2_1)$, we conclude the proof of Lemma \ref{F11}.
\end{proof}

\section{Proof of Theorem \ref{blowup}.}\label{sec-ut}

In this section we will prove Theorem \ref{blowup}. For that purpose, we will make use of the  results obtained in Section \ref{aux}. In fact, thanks to the invariance of the linear problem associated with \eqref{G-sys} and the coercive properties of $\tilde{G}_i(t)$, as stated  in Lemma \ref{F11}, we will show the blow-up result of  \eqref{G-sys}. Note that  the techniques used in our previous works \cite{Our3, Our4} cannot be entirely followed here. By introducing some new functionals $L_1(t)$ and $L_2(t)$ (see \eqref{L1} and \eqref{L2} below), which verify two integral inequalities similar to the ones in \cite{Our4}, we improve the blow-up result in \cite{Palmieri} for the solution of \eqref{G-sys}.  \\

Let
\begin{equation}\label{L1}
L_1(t):=
\frac{1}{8}\int_{T_2}^t  \int_{\R^N}|v_t(x,s)|^p\psi_1(x,s)dx ds
+\frac{C_3 \e}{8},
\end{equation}
and
\begin{equation}\label{L2}
L_2(t):=
\frac{1}{8}\int_{T_2}^t  \int_{\R^N}|u_t(x,s)|^q\psi_2(x,s)dx ds
+\frac{C_3 \e}{8},
\end{equation}
where $C_3=\min(C_1(f_1,g_1)/4,C_2(f_2,g_2)/4,8C_{\tilde{G}_1},8C_{\tilde{G}_2})$ (see Lemma  \ref{F11} for the constants $C_{\tilde{G}_1}$ and $C_{\tilde{G}_2}$) and $T_2:=T_2(\mu_1,\mu_2)>T_1$ is a positive time such that $\frac{\eta_0}{4}-\frac{3\Gamma_i(t)}{32}>0$ and $\Gamma_i(t)>0$, for i=1,2, for all $t \ge T_2$; see  \eqref{gamma} and \eqref{lambda'lambda1}. \\
Now, we introduce
$$\mathcal{F}_i(t):= \tilde{G}_i(t)-L_i(t), \quad \forall \ i=1,2.$$
Hence, thanks to \eqref{G2+bis41}, we see that $\mathcal{F}_1$ satisfies
\begin{equation}\label{G2+bis6}
\begin{array}{rcl}
\d \mathcal{F}^{'}_1(t)+\frac{3\Gamma_1(t)}{4}\mathcal{F}_1(t) &\ge& \d \left(\frac{\eta_0}{4}-\frac{3\Gamma_1(t)}{32}\right)\int_{T_2}^t \int_{\R^N}|v_t(x,s)|^p\psi_1(x,s)dx ds\vspace{.2cm}\\ &+&  \d \frac{7}{8}\int_{\R^N}|v_t(x,t)|^p\psi_1(x,t) dx+C_3 \left(\eta_0-\frac{3\Gamma_1(t)}{32}\right) \e\\
&\ge&0, \quad \forall \ t \ge T_2.
\end{array}
\end{equation}
After multiplying  \eqref{G2+bis6} by $\frac{t^{3\mu_1/4}}{\rho_1^{3/2}(t)}$ and integrating over $(T_2,t)$, we obtain
\begin{align}\label{est-G111}
 \mathcal{F}_1(t)
\ge \mathcal{F}_1(T_2)\frac{(T_2)^{3\mu/4}}{\rho_1^{3/2}(T_2)}\frac{\rho_1^{3/2}(t)}{t^{3\mu_1/4}}, \ \forall \ t \ge T_2,
\end{align}
where $\rho_1(t)$ is defined by \eqref{lmabdaK}.\\
Using Lemma \ref{F11} and  $C_3=\min(C_1(f_1,g_1)/4,C_2(f_2,g_2)/4,8C_{\tilde{G}_1},8C_{\tilde{G}_2}) \le 8C_{\tilde{G}_1}$, one can see that $\d \mathcal{F}_1(T_2)=\GG(T_2)-\frac{C_3 \e}{8} \ge C_{\tilde{G}_1}\e -\frac{C_3 \e}{8}\ge 0$. \\
Consequently, we infer that
\begin{equation}
\label{G2-est}
\GG(t)\geq L_1(t), \ \forall \ t \ge T_2.
\end{equation}
In a similar way, we have an analogous lower bound for $\HH (t)$, that is
\begin{equation}
\label{G2-est-bis}
\HH(t)\geq L_2(t), \ \forall \ t \ge T_2.
\end{equation}
Employing the H\"{o}lder's inequality together with the estimates \eqref{psi} and \eqref{F2postive}, a lower bound for the nonlinear term can written as
\begin{equation}\label{vt-pho}
\begin{array}{rcl}
\d \int_{\R^N}|v_t(x,t)|^p\psi_1(x,t)dx &\geq&\d (\HH(t))^p\left(\int_{|x|\leq t+R}(\psi_2(x,t))^{\frac{p}{p-1}}(\psi_1(x,t))^{\frac{-1}{p-1}}dx\right)^{-(p-1)} \vspace{.2cm}\\ &\geq& C (\HH(t))^p \rho_1(t)\rho_2^{-p}(t)e^{-(p-1)t}t^{-\frac{(N-1)(p-1)}2}.
\end{array}
\end{equation}
From \eqref{lmabdaK} and \eqref{est-double}, observe that
 \begin{equation}\label{pho-est}
 \d \rho_1(t)e^{t} \le C t^{\frac{\mu_1}{2}}, \ \forall \ t \ge T_0/2.
 \end{equation}
Note that similar estimate holds for $ \rho_2(t)$.\\
Combining  \eqref{pho-est} (and the equivalent estimate for $ \rho_2(t)$) and \eqref{vt-pho}, we deduce that  
\begin{equation}\label{4.9}
\d \int_{\R^N}|v_t(x,t)|^p\psi_1(x,t)dx \geq C t^{-\frac{(N-1)}{2}(p-1)+ \frac{\mu_1}{2}-\frac{\mu_2}{2}p}(\HH(t))^p, \ \forall \ t \ge T_2.
\end{equation}
Now, recall the definition of $L_1(t)$, given by \eqref{L1}, and injecting \eqref{G2-est-bis} in \eqref{4.9}, we conclude that
\begin{equation}
\label{inequalityfornonlinearin}
L_1^{'}(t)\geq C t^{-\frac{(N-1)}{2}(p-1)+ \frac{\mu_1}{2}-\frac{\mu_2}{2}p}(L_2(t))^p, \quad \forall \ t \ge T_2.
\end{equation}
Likewise, we have
\begin{equation}
\label{inequalityfornonlinearin2}
L_2^{'}(t)\geq C t^{-\frac{(N-1)}{2}(q-1)+ \frac{\mu_2}{2}-\frac{\mu_1}{2}q}(L_1(t))^q, \quad \forall \ t \ge T_2.
\end{equation}
A straightforward integration of \eqref{inequalityfornonlinearin} and \eqref{inequalityfornonlinearin2} on $(T_2, t)$ yields, respectively,
\begin{equation}
L_1(t)\geq \frac{C_3 \e}{8}+C \int_{T_2}^t  (1+s)^{-\frac{(N-1)}{2}(p-1)+ \frac{\mu_1}{2}-\frac{\mu_2}{2}p}(L_2(s))^p ds, \quad \forall \ t \ge T_2,
\end{equation}
and
\begin{equation}
L_2(t)\geq \frac{C_3 \e}{8}+C \int_{T_2}^t  (1+s)^{-\frac{(N-1)}{2}(q-1)+ \frac{\mu_2}{2}-\frac{\mu_1}{2}q}(L_1(s))^q ds, \quad \forall \ t \ge T_2.
\end{equation}
Observe  that $\d \frac{1}{T_2}(T_2+s)\le 1+s \le T_2+s$ for all $s \in (T_2, t)$, because $T_2>1$, we infer that
\begin{equation}
\label{integ-ineq}
L_1(t)\geq \frac{C_3 \e}{8}+C \int_{T_2}^t  (T_2+s)^{-\frac{(N-1)}{2}(p-1)+ \frac{\mu_1}{2}-\frac{\mu_2}{2}p}(L_2(s))^p ds, \quad \forall \ t \ge T_2,
\end{equation}
and
\begin{equation}
\label{integ-ineq2}
L_2(t)\geq \frac{C_3 \e}{8}+C \int_{T_2}^t  (T_2+s)^{-\frac{(N-1)}{2}(q-1)+ \frac{\mu_2}{2}-\frac{\mu_1}{2}q}(L_1(s))^q ds, \quad \forall \ t \ge T_2.
\end{equation}
At this level, the remaining part of the proof is the same as the one in \cite[Sections 4.2 and 4.3]{Palmieri}. More precisely, here \eqref{integ-ineq} (resp. \eqref{integ-ineq2} corresponds to (25) (resp. (26)) in \cite{Palmieri}. Nevertheless,  in the present work the shift of the dimension $N$ is with  $\mu_i$ instead of $\sigma(\mu_i)$ in \cite{Palmieri}, where $\sigma(\mu_i)$ is defined by \eqref{sigma}.

This achieves the proof of Theorem \ref{blowup}.

\section{Appendix}
The aim of this appendix is to recall some properties of the function $\rho_i^{\eta}(t)$, for $i=1,2$, the solution of \eqref{lambda}. Mainly, we will use  the computations in \cite{Tu-Lin}. Hence, we can write the expression of  $\rho_i^{\eta}(t)$ as follows:
\begin{equation}\label{lmabdaK-A}
\rho^{\eta}_i(t)=(\eta (t+1))^{\frac{\mu_i+1}{2}}K_{\frac{\sqrt{\de_i}}{2}}(\eta  (t+1)), \ i=1,2,
\end{equation}
where 
$$K_{\nu}(t)=\int_0^\infty\exp(-t\cosh \zeta)\cosh(\nu \zeta)d\zeta,\ \nu\in \mathbb{R}.$$
From  the proof of  \cite[Lemma 2.1]{Tu-Lin}, one can see that
\begin{equation}\label{lambda'lambda}
\frac{1}{\rho^{\eta}_i(t)}\frac{d\rho^{\eta}_i(t)}{dt}=\frac{\mu_i+1+\sqrt{\de_i}}{2  (t+1)}-\eta \frac{K_{\frac{\sqrt{\de_i}}2 +1}(\eta  (t+1))}{K_{\frac{\sqrt{\de_i}}2}(\eta  (t+1))}, \ i=1,2.
\end{equation}
On the other hand,   the function $K_{\nu}(t), \nu \in \R,$ satisfies (\cite{Gaunt})
\begin{equation}\label{Kmu}
K_{\nu}(t)=\sqrt{\frac{\pi}{2t}}e^{-t} (1+O(t^{-1}), \quad \text{as} \ t \to \infty.
\end{equation}
A combination of \eqref{lambda'lambda} and \eqref{Kmu} yields 
\begin{equation}\label{lambda'lambda1}
\frac{1}{\rho^{\eta}_i(t)}\frac{d\rho^{\eta}_i(t)}{dt}=-\eta+O(t^{-1}), \quad \text{as} \ t \to \infty, \ i=1,2.
\end{equation}


\bibliographystyle{plain}

\end{document}